\numberwithin{equation}{section}
\newtheorem{theorem}{Theorem}[section]
\newtheorem{lemma}{Lemma}[section]
\newtheorem{proposition}{Proposition}[section]
\newtheorem{corollary}{Corollary}[section]
\newtheorem{remark}{Remark}[section]
\newtheorem{definition}{Definition}[section]
\newtheorem{hyp}{Assumption}[section]
\newcommand{\bhyp }{\begin{hyp} \rm }
\newcommand{\ehyp }{\end{hyp}}
\def\proof{\noindent {\it Proof. $\, $}}
\def\endproof{\hfill $\Box$}
\newcommand{\cadlag}{RCLL}
\newcommand{\cadlags}{RCLL }
\newcommand{\wt}{\widetilde }
\newcommand{\wh}{\widehat }
\newcommand{\etab}{\eta }
\newcommand{\wtetab}{\wt{\eta }}
\def\phi{\varphi}
\newcommand{\UU }{D}
\newcommand{\alphat}{n}
\newcommand{\deltag}{\delta}
\newcommand{\dimn}{n}
\newcommand{\bfun}{b}
\newcommand{\cfun}{c}
\newcommand{\dfun}{d}
\newcommand{\qqzet}{q}
\newcommand{\qA}{\Psi^{q,\UU}}
\newcommand{\PhiA}{\Phi }
\newcommand{\LsT}{L_2({\cG}_T)}
\newcommand{\Ss}{{\mathcal{S}}^{2}}
\newcommand{\Hs}{{\mathcal{H}}^{2}(Q)}
\newcommand{\Ls}{{\mathcal{L}}^{2}(M)}
\newcommand{\Hb}{{\mathcal{H}}^{2}_{\beta}(Q)}
\newcommand{\Lb}{{\mathcal{L}}^{2}_{\beta }(M)}
\newcommand{\Sb}{{\mathcal{S}}^{2}_{\beta }}
\newcommand{\SSs}{\mathbb{S}^2}
\newcommand{\HHs}{\mathbb{H}^2}
\newcommand{\SSb}{\mathbb{S}^2_{\beta}}
\newcommand{\HHb}{\mathbb{H}^2_{\beta}}
\newcommand{\Ltg}{L^2({\cG}_T)}
\newcommand\I{\mathds{1}}
\newcommand{\E}{{\mathbb E}}
\newcommand{\cG}{\mathcal F}
\newcommand{\cE}{\mathcal E}
\newcommand{\cB}{\mathcal B}
\newcommand{\cL}{\mathcal L}
\newcommand{\cS}{\mathcal S}
\newcommand{\bff}{\mathbb F}
\newcommand{\bpp}{\mathbb P}
\newcommand{\brr}{\mathbb R}
\title{{\large \bf   
EXISTENCE, UNIQUENESS AND STRICT COMPARISON THEOREMS FOR BACKWARD STOCHASTIC DIFFERENTIAL EQUATIONS DRIVEN BY RCLL MARTINGALES} \vskip 35 pt }
\author{Tianyang Nie$\,^{a}$\footnote{The research of T. Nie and M. Rutkowski was supported by the DVC Research Bridging Support Grant {\it Pricing of American and game options in market with frictions}. The work of T. Nie was supported by the National Natural Science Foundation of China (No. 12022108,11971267,11831010,61961160732).} \ and Marek Rutkowski$\,^{b,c}$ \\ \\
\\$^{a\,}$School of Mathematics, Shandong University,\\ Jinan, Shandong
250100, China\\ \\  $^{b\,}$School of Mathematics and Statistics, University of Sydney
\\ Sydney, NSW 2006, Australia\\ \\ $^{c\,}$Faculty of Mathematics and Information Science,
Warsaw University of Technology, \\ 00-661 Warszawa, Poland \\ }
\date{\vskip 30 pt \today \vskip 25 pt}
\begin{document}

\maketitle

\begin{abstract}
Results on the existence, uniqueness and strict comparison for solutions to
a BSDE driven by a multi-dimensional RCLL martingale are established.
The goal is to develop a general multi-asset framework encompassing a wide spectrum of nonlinear financial models with jumps, including
as particular cases the setups studied by Peng and Xu \cite{PX2009,PX2010} and Dumitrescu et al. \cite{DGQS2018} who dealt with BSDEs driven by a one-dimensional Brownian motion and a purely discontinuous martingale with a single jump.

\vskip 40 pt
\noindent Keywords: backward stochastic differential equation, RCLL martingale, comparison theorem
\vskip 4 pt
\noindent AMS Subject Classification: 60H10, 60H30, 91G30, 91G40

\end{abstract}

\renewcommand{\thefootnote}{\fnsymbol{footnote}}
\footnotetext{\textit{{E-mail:}} {nietianyang@sdu.edu.cn (Tianyang\ NIE); marek.rutkowski@sydney.edu.au (Marek\ RUTKOWSKI).}}

\newpage

\section{Introduction}   \label{sec1}

The goal of this paper is to reexamine and generalize results from papers by Carbone et al. \cite{CFS2008}, Dumitrescu et al. \cite{DGQS2018}, El Karoui and Huang \cite{ELH1997}, Nie and Rutkowski \cite{NR2016} and Peng and Xu \cite{PX2009,PX2010} on the backward stochastic differential equations (BSDEs) driven by multi-dimensional either continuous (\cite{NR2016}, for the extended studies on BSDEs driven by continuous martingales, the readers are referred to e.g. \cite{L2006,M2009} and the reference in  \cite{NR2016})) or RCLL (\cite{CFS2008,DGQS2018,ELH1997,PX2009,PX2010}) martingales by proving several original results and providing a comprehensive study of a particular class of BSDEs driven by a multi-dimensional RCLL martingale.
In particular, we establish the existence and uniqueness theorem for solutions to a BSDE as well as
a fairly general version of the strict comparison theorem for solutions to BSDEs, which covers both the classical comparison theorems driven by a Brownian motion (see, e.g., El Karoui et al. \cite{EPQ1997}) and their extensions to BSDEs with an additional jump martingale. The latter case was previously studied by Peng and Xu \cite{PX2009,PX2010} and Dumitrescu et al. \cite{DGQS2018} who considered BSDEs driven by a combination of the Wiener process and a single-jump compensated martingale associated with a particular event, which can be interpreted as a source of an extraneous risk (for instance, the risk of a catastrophe, an extremal weather event or a bankruptcy of a company) and is characterized by the process of a stochastic intensity of its occurrence. It is clear that the framework studied in the present work includes the case of a single-jump martingale but it covers also the case of several extraneous events occurring within a predetermined time frame and, more generally, the case of an arbitrary RCLL martingale with the predictable covariation satisfying Assumption \ref{ass2.1}.

The main practical motivation for this research is to provide an underpinning for stochastic models of nonlinear financial markets with credit risk where multi-dimensional discontinuous martingales appear in a natural way. In particular, our results encompass market models with a single default event, which were previously studied in Peng and Xu \cite{PX2009,PX2010} and Dumitrescu et al. \cite{DGQS2018}. It should be stressed that in \cite{PX2009,PX2010} and \cite{DGQS2018}, the authors examine a special class of BSDEs driven by a one-dimensional Brownian motion and a purely discontinuous martingale with a single jump of a unit size. In contrast, we study BSDEs driven by an arbitrary multi-dimensional martingale belonging to the class of multi-dimensional RCLL martingales satisfying Assumption \ref{ass2.1}. We also postulate the predictable representation property of the driving martingale, which is known to hold in some circumstances (see, e.g., Kusuoka \cite{K1999} or Jeanblanc and Le Cam \cite{JC2009}). Note, however, that this assumption can be readily relaxed in the existence and uniqueness of solutions theorem, provided that an additional orthogonal martingale term is introduced in the definition of a solution to a BSDE (see, e.g., Carbone et al. \cite{CFS2008} and El Karoui and Huang \cite{ELH1997}) and thus this possible extension is not presented here.

Due to their generality, our results are capable of covering a much broader spectrum of financial models, for instance, models with the counterparty credit risk where several default times, and thus also multi-dimensional RCLL martingales, appear in a natural way. By the same token, results from this work can be used to study market models where asset prices are governed by noise processes with jumps. Applications of our results on BSDEs and reflected BSDEs driven by RCLL martingales to the valuation, hedging and exercising of American and game options are presented in the follow-up works by Kim et al. \cite{KNR2018a,KNR2018b}. Finally, it is also worth mentioning that several classes of BSDEs driven by a Brownian motion and a martingale random measure were studied by, among others, Barles et al. \cite{BBP1995}, El Karoui et al. \cite{EMN2016}, Jeanblanc et al. \cite{JMN2012,JMN2013},  Quenez and Sulem \cite{QS2013}, Royer \cite{R2006} and Tang and Li \cite{TL1994}. We stress that their results do not cover the class of BSDEs driven by RCLL martingales considered in the present work.

After our research was completed, we learnt about an excellent recent paper by Papapantaleon et al. \cite{PPS2018}
where the authors examined well-posedness of BSDEs driven by a general martingale in a possibly stochastically discontinuous filtration.  In particular, results from \cite{PPS2018} are capable of covering BSDEs in both continuous- and discrete-time frameworks, unlike all other papers on continuous-time BSDEs driven by discontinuous martingales (and possibly also a Poisson random measure). We acknowledge that our goals were different and by far less ambitious since we focused on a particular setup, which is well-adapted to direct applications in the study of nonlinear financial markets with credit risk where the intensity-based approach to modeling of default times is ubiquitous (see, e.g., Bielecki et al. \cite{BJR2009}).
Although our setup is similar to that of Carbone et al. \cite{CFS2008}, we contend that the original contributions of the present work are nontrivial and of tangible practical importance. For the reader's convenience, we compare our methods with those from \cite{CFS2008,PPS2018} and we argue that our results cannot be
obtained from results established in either \cite{CFS2008} or \cite{PPS2018}.

Firstly, we apply directly It\^{o}'s formula and construct the contraction mapping by using the supremum norm for the component $Y$
of a solution to a BSDE (or the process $Y-D$ in a slightly more general setup of BSDE \eqref{BSDE1}). In Section 3.6 of \cite{PPS2018} and Section 3 of \cite{CFS2008},  the authors also apply It\^{o}'s formula to the appropriately weighted $L^2$-type norm (see page 44 in \cite{PPS2018} and the proof of Propostion 3.1 in \cite{CFS2008}) but they construct the contraction mapping by using this weighted $L^2$-type norm for $Y$ and thus they cannot guarantee that $Y$ is an RCLL process and, furthermore, are unable to ensure that the uniqueness of a solution to a BSDE holds in the sense of indistinguishability of stochastic processes (see, e.g., Theorem 3.23 in \cite{PPS2018} and Theorem 3.1 in \cite{CFS2008}). These manifestly technical, but nevertheless important in practical applications, shortcomings stem from the fact that the modification procedure for the classical BSDE (see, e.g., \cite{EPQ1997}) is no longer applicable when the process $\langle M \rangle$ (the predictable covariation process of the martingale $M$) is not absolutely continuous with respect to the Lebesgue measure.

Secondly, and more importantly, we give in Proposition \ref{pro5.1} an explicit solution to the linear BSDE  and subsequently use its form to prove the strict comparison theorem, which is a crucial result for financial applications, most notably, the arbitrage-free pricing of European, American and game options in nonlinear markets (see \cite{BCR2018,KNR2018a,KNR2018b}). Carbone et al. \cite{CFS2008} established the comparison theorem under some assumptions which are not easy to verify (see Theorem 2.2 and Lemma 2.2 in \cite{CFS2008}) and they studied a BSDE with the generator term involving integration w.r.t. $\langle M \rangle$, as opposed to the integration w.r.t. a process $Q$ as in the BSDE \eqref{BSDE1x}. Despite the fact that the integration w.r.t. $Q$ entails additional difficulties, we were able to establish the strict comparison theorem by first obtaining an explicit expression for a unique solution to the linear BSDE with integration w.r.t. $Q$, which is also different from \cite{CFS2008}. It should also be noticed that using the method of \cite{PPS2018} it is possible to establish the comparison theorem under rather restrictive assumptions (see Theorem 3.25 in \cite{PPS2018}) but not the strict comparison theorem obtained in the present work (see Theorem \ref{the6.1}).

The paper is organized as follows. We start by recalling some definitions from \cite{CFS2008,ELH1997,NR2016} pertaining to the backward stochastic differential equations driven by a multi-dimensional martingale. For the sake of conciseness, we refer the interested reader to \cite{NR2016} for further references and comments on various Lipschitz-type conditions for generators of BSDEs appearing in financial applications. First, in Proposition \ref{pro3.1} we obtain the {\it a priori} estimates for a solution to a BSDE. Next, in Theorem  \ref{the4.1}, we prove the existence and uniqueness of a solution to a BSDE under the assumption that its generator is uniformly $m$-Lipschitz continuous. Proposition \ref{pro5.1} furnishes an explicit representation for a unique solution to the linear BSDE. Finally, in Theorem \ref{the6.1} we establish the strict comparison property of solutions to the BSDE \eqref{BSDE1}.
Let us conclude the introduction by mentioning that as the contiuation of this work, reflected BSDEs and doubly reflected BSDEs are investigated in \cite{NR2019}.

\section{BSDEs Driven by RCLL Martingales}     \label{sec2}

We assume that we are given a filtered probability space $(\Omega,\cG,\bff ,\bpp)$ satisfying the usual conditions of right-continuity and completeness. Moreover, the initial $\sigma$-field $\cG_0$ is assumed to be trivial.  For any two processes, say $X$ and $Y$, the equality $X=Y$ means that 
$\bpp (X_t=Y_t,\,\forall\,t\in [0,T])=1$. 
By convention, we set $X_{0-}=0$ for any stochastic process $X$ and thus $\Delta X_0 := X_0-X_{0-}=X_0$ is the jump of $X$ at time 0. Finally, the integral $\int_s^t X_u\,dY_u$ is interpreted as $\int_{]s,t]}X_u\,dY_u$ where $]s,t]=\{u\in [0,T]:s<u\le t\}$.

Let $M=((M^1_t,M^2_t,\ldots,M^{\dimn}_t)^{\ast},\,t\in [0,T])$ be an $\dimn$-dimensional, square-integrable martingale defined on a filtered probability space $(\Omega,\cG,\bff,\bpp)$. Since the filtration $\bff$ is assumed to be right-continuous, an \cadlags modification of any $\bff$-martingale is known to exist and thus it is assumed that $M$ is an \cadlags process.
We denote by  $\langle M \rangle$ (resp. $[M]$) the predictable covariation process (resp. covariation process) of $M$ meaning that the process $\langle M \rangle$ (resp. $[M]$) takes values in $\brr^{\dimn \times \dimn }$ and the $(j,k)$th entry of the random matrix $\langle M \rangle_t$ (resp. $[M]$) equals $\langle M^j,M^k\rangle_t$ (resp. $[M^j,M^k]$). As in \cite{CFS2008,ELH1997,NR2016}, we henceforth work under the following standing assumption regarding the predictable covariation of $M$.

\bhyp \label{ass2.1}
There exists an $\brr^{\dimn \times \dimn}$-valued, $\bff$-predictable process $m$ and an $\bff$-adapted, continuous, nondecreasing process $Q$ with $Q_0=0$ such that, for all $t\in [0,T]$,
\begin{equation} \label{eq2.1}
\langle M\rangle_t=\int_0^t m_u m_u^{\ast}\,dQ_u.
\end{equation}
In addition, the process $Q$ is bounded: there exists a constant $C_Q$ such that $Q_t\le C_Q$ for all $t\in[0,T]$.
\ehyp

The postulated continuity of $Q$ entails that the process $\langle M \rangle$ is continuous as well. This property is known to be satisfied when $M$ is an $\bff$-quasi-left-continuous process, meaning that it does not have discontinuities at $\bff$-predictable stopping times (see Chapter VI in He et al. \cite{HWY1992} or Chapter 4 in Protter \cite{P2004}). In particular, it holds when the filtration $\bff$ is assumed to be quasi-left-continuous (as was postulated in Carbone et al. \cite{CFS2008}), that is, if $\cG_{\tau}=\cG_{\tau-}$ for every $\bff$-predictable stopping time $\tau$. Indeed, the latter property is known to be valid if and only if any uniformly integrable $\bff$-martingale is a quasi-left-continuous process. In fact, if the filtration $\bff$ is quasi-left-continuous, then $\bff$-martingales jump only at $\bff$-totally inaccessible stopping times.

The boundedness $Q$ is easy to be satisfied, for more details, see e.g. Remark 2.1 in Nie and Rutkowski \cite{NR2016} or Section 2.1 in Morlais \cite{M2009}. Let us also mention that it looks that the boundedness of $Q$ is a useful and, in some sense, a necessary condition. Indeed, as we explained in the introduction, to show that the component $Y$ of a solution to a BSDE is an RCLL process and ensure that the BSDE holds in the sense of indistinguishability of stochastic processes,  the contraction mapping based on the weighted $L^2$-type norm for $Y$ (see page 44 in \cite{PPS2018} and proof of Proposition 3.1 in \cite{CFS2008}) is not sufficient. Hence we construct the contraction mapping by using the supremum norm for $Y$ and for that purpose the boundedness of $Q$ is needed (see the proof of Theorem \ref{the4.1}).

For a positive semi-definite matrix $a$, we denote by $a^{\frac{1}{2}}$ the unique square root of $a$, that is, $a^{\frac{1}{2}}a^{\frac{1}{2}}=a$. Recall that there exists an orthogonal matrix $O$ and a diagonal matrix $b$ with nonnegative diagonal entries such that $a=O^{\ast}b O$. The square root of $b$ is also a diagonal matrix, denoted by $b^{\frac{1}{2}}$, with diagonal entries equal to square roots of diagonal entries of $b$. Then we set $a^{\frac{1}{2}}=O^{\ast}b^{\frac{1}{2}}O$. Moreover, if $a$ is positive definite, then the inverse of $a^{\frac{1}{2}}$, denoted as $a^{-\frac{1}{2}}$, is well defined. It is known that $m_u m_u^{\ast}$ is a square, positive semi-definite matrix, so that $(m_u m_u^{\ast})^{\frac{1}{2}}$ is well defined. If $m_u m_u^{\ast}$ is positive definite, then $m_u m_u^{\ast}$ is invertible, and thus we can also define $\alphat_u :=(m_u m_u^{\ast})^{-\frac{1}{2}}$.

Regarding the symmetry of the matrix-valued process $m$ appearing in \eqref{eq2.1}, observe that condition  \eqref{eq2.1} with an $\brr^{\dimn \times \dimn}$-valued, $\bff$-predictable process $m$ is equivalent to $\langle M\rangle_t=\int_0^t\bar{m}_u\bar{m}_u^{\ast}\,dQ_u$ with a symmetric $\brr^{\dimn \times \dimn}$-valued, $\bff$-predictable process $\bar{m}$ since it suffices to take $\bar{m}_u=(m_u m_u^{\ast})^{\frac{1}{2}}$. Hence we may and do assume, without loss of generality, that the process $m$ takes values in the space of symmetric matrices so that $m_u=m^{\ast}_u=(m_u m_u^{\ast})^{\frac{1}{2}}$.

Let $\|\cdot\|$ stand for the Euclidean norm in $\brr^{\dimn}$. We denote by $\Ls$ the space of all $\brr^{\dimn}$-valued, $\bff$-predictable processes $Z$ with the pseudo-norm $\|\cdot\|_{\Ls}$ given by
\begin{align*}
\|Z\|_{\Ls}^2:=\E\bigg[\int_0^T\|m_tZ_t\|^2\,dQ_t\bigg]<\infty.
\end{align*}
We also make the following standing assumption, which provides a natural underpinning for the existence of a solution the BSDE \eqref{BSDE1}.

\bhyp  \label{ass2.2}
The martingale $M$ has the predictable representation property under $\bpp$ with respect to the filtration $\bff$, in the sense that for any real-valued, square-integrable $\bff$-martingale $N$ with $N_0=0$ there exists a process $Z \in \Ls$ such that $N_t=\int_0^t Z^*_u\,dM_u$. Moreover, the uniqueness of $Z$ in
$\|\cdot\|_{\Ls}$ holds, that is, if $N_t=\int_0^t Z^*_u\,dM_u=\int_0^t \wt{Z}^*_u\,dM_u$, then $\|Z-\wt{Z}\|_{\Ls}=0$.
\ehyp

\begin{remark} \label{rem2.1}
{\rm  Assumption \ref{ass2.2} is known to be satisfied when $M$ is either a Wiener process or a compensated Poisson process. Kusuoka \cite{K1999} has shown that Assumption  \ref{ass2.2} is valid for some special choice of $\bff$ and $M$, which was made in \cite{DGQS2018,PX2009,PX2010} (see also Section 3.3 in Bielecki et al. \cite{BJR2009}, Theorem 2.1 in Jeanblanc and Le Cam \cite{JC2009} or Assumption A.1 and Example 1 in Jeanblanc et al. \cite{JMN2012,JMN2013}). If Assumption \ref{ass2.2} is relaxed, then the results of this paper can be generalized by combining the methods used here with the arguments from \cite{CFS2008,ELH1997} provided that we assume, in addition, that the filtration $\bff$ is quasi-left-continuous. Then the extended martingale representation property of $M$ holds, in the sense that any real-valued, square-integrable $\bff$-martingale $N$ can be represented as $N_t=\int_0^t Z^*_u\,dM_u+L_t$ where $L$ is a square-integrable $\bff$-martingale orthogonal to $M$.
}
\end{remark}

For a fixed horizon date $T>0$, we consider the following {\it backward stochastic differential equation} (BSDE) on $[0,T]$ with data $(g,\eta,\UU)$
\begin{equation} \label{BSDE1}
\left\{ \begin{array} [c]{ll}
dY_t=-g(t,Y_t,Z_t)\,dQ_t+Z_t^{\ast}\,dM_t+d\UU_t,\medskip\\
Y_{T}=\eta ,
\end{array} \right.
\end{equation}
where $\UU$ is a given real-valued, $\bff$-adapted process or, more explicitly, for every $t \in [0,T]$,
\begin{align} \label{BSDE1x}
Y_t=\eta+\int_t^T g(t,Y_u,Z_u)\,dQ_u-\int_t^T Z_u^{\ast}\,dM_u-(\UU_T-\UU_t)
\end{align}
where, as usual, equality \eqref{BSDE1x} is assumed to be satisfied $\bpp$-a.s.. The next definition states the measurability properties of the {\it generator} $g$.

\begin{definition} \label{def2.1}
{\rm A {\it generator} $g:\Omega \times[0,T] \times \brr \times \brr^{\dimn } \rightarrow \brr$ is a $\cG \otimes \cB ([0,T])\otimes\cB(\brr)\otimes\cB(\brr^{\dimn})$-measurable function such that the process $g(\cdot,\cdot,y,z)$ is $\bff$-progressively measurable, for every $(y,z)\in\brr \times \brr^{\dimn}$.}
\end{definition}

We denote by $\mu_Q$ the Dol\'eans measure of the process $Q$ on the space $\Omega \times [0,T]$ endowed with the product $\sigma$-algebra
$\cG_T\otimes\cB([0,T])$ so that $\mu_Q(A)=\E\big[\int_0^T\I_A (\omega,t)\,dQ_t(\omega)\big]$ for every $A\in\cG_T\otimes\cB([0,T])$.
We first recall the Lipschitz condition, which is frequently employed in the existing literature on BSDEs driven by a Brownian motion.

\begin{definition} \label{def2.2}
{\rm A generator $g$ is {\it uniformly Lipschitz continuous} if there exists a constant $L$ such that,
for $\mu_Q$-almost every $(\omega,t)$, for all $y_1,y_2\in \brr,\, z_1,z_2\in\brr^{\dimn}$,}
\begin{equation}  \label{eq2.4}
|g(t,y_1,z_1)-g(t,y_2,z_2)|\le L\big(|y_1-y_2|+\|z_1-z_2\|\big).
\end{equation}
\end{definition}

Let $\Ss$ stand for the space of all real-valued, \cadlag, $\bff$-adapted processes endowed with the norm $\|\cdot \|_{\Ss}$ given by
\begin{align*}
\|X\|_{\Ss}^2:=\E\bigg[\sup_{t\in [0,T]}X^2_t\bigg]<\infty.
\end{align*}
We denote by $\Hs$ the space of equivalence classes of all real-valued, $\bff$-progressively measurable processes $X$ with the
pseudo-norm $\|\cdot \|_{\Hs }$ given by
\begin{align*}
\|X\|_{\Hs}^2:=\E\bigg[\int_0^T X^2_t\,dQ_t\bigg]<\infty.
\end{align*}
We observe that $\Ss\subset\Hs$ and, for brevity, we denote by $\SSs$ the product space $\Ss\times\Ls$ with the following pseudo-norm,
for every $(X,Z)\in\Ss\times\Ls$,
\begin{align*}
\|(X,Z)\|_{\SSs}=\|X\|_{\Ss}+\|Z\|_{\Ls}.
\end{align*}
By the same token, we introduce the space $\HHs:=\Hs\times\Ls$ endowed with the natural pseudo-norm
\begin{align*}
\|(X,Z)\|_{\HHs}=\|X\|_{\Hs}+\|Z\|_{\Ls}.
\end{align*}
As usual, $\Ltg$ stands  for the space of all real-valued, $\cG_{T}$-measurable random variables $\eta$ such that $\|\eta\|_{\Ltg}^2=\E (\eta^2)<\infty$.

In the study of the classical BSDEs driven by a Brownian motion (also with jumps generated by a Poisson random measure), it suffices to take $Q_t=t$. Moreover, in order to ensure the existence and uniqueness of a solution $(Y,Z)$ it is common to postulate that a generator $g$ is uniformly Lipschitz continuous with respect to the variables $y$ and $z$. However, it was observed that the uniform Lipschitz condition \eqref{eq2.4} is no longer suitable when dealing with BSDEs with a single jump, which was aimed to represent the occurrence of the default event, and thus condition \eqref{eq2.4} has been modified to explicitly account for the intensity process $\lambda $. For instance, Theorem 3.1 in Peng and Xu \cite{PX2009,PX2010} and Proposition 2 in Dumitrescu et al. \cite{DGQS2018} (see also Proposition 2.3 in Dumitrescu et al. \cite{DQS2018}) hinge on the postulate that a generator satisfies the following condition
\begin{equation}  \label{eq2.5}
|g(t,y_1,z_1,k_1)-g(t,y_2,z_2,k_2)|\le L\big(|y_1-y_2|+\|z_1-z_2\|+\sqrt{\lambda_t}|k_1-k_2|\big)
\end{equation}
where the variables $z_1,z_2$ (respectively, $k_1,k_2$) correspond to the one-dimensional Brownian motion (respectively, the compensated martingale of the jump process). A generator $g(t,y,z,k)$ satisfying condition  \eqref{eq2.5} and such that the process $g_t :=g(t,0,0,0)$ belongs to $\Hs$ is called the {\it $\lambda$-admissible driver} in \cite{DGQS2018}

More generally, in existing papers concerned with BSDEs driven by a multi-dimensional martingale  (see, e.g., \cite{CFS2008,ELH1997,NR2016}) a generator is typically postulated to satisfy some kind of the $m$-Lipschitz condition where $m$ is the process introduced in Assumption \ref{ass2.1}. In particular, the BSDE driven by a multi-dimensional continuous martingale and with a uniformly $m$-Lipschitz continuous generator was studied in \cite{NR2016} where the authors introduced also certain alternative conditions, which are more adequate when the process $m$ is unbounded. In the present work, we focus on BSDEs with generator satisfying the following regularity condition.

\begin{definition} \label{def2.3}
{\rm A generator $g$ is {\it uniformly $m$-Lipschitz continuous} if there exists a constant $ \wh{L} >0$ such that, for $\mu_Q$-almost every $(\omega,t)$,
and all $y_1,y_2\in\brr,\, z_1,z_2\in\brr^{\dimn}$,}
\begin{equation} \label{eq2.6}
|g(t,y_1,z_1)-g(t,y_2,z_2)|\le \wh{L}\big(|y_1-y_2|+\|m_t(z_1-z_2)\|\big).
\end{equation}
\end{definition}

\begin{remark}  \label{rem2.2} {\rm
We note that condition \eqref{eq2.5}, which is employed in Peng and Xu \cite{PX2009,PX2010} and Dumitrescu et al. \cite{DGQS2018}, can be seen as a special case of \eqref{eq2.6}.
Indeed, suppose that $M^1:=W$ is a Brownian motion and $M^2_t:=\I_{\{\tau\le t\}}-\int_0^t \lambda_u\,du $ is the compensated martingale of the indicator process of a strictly positive $\bff$-stopping time $\tau$ with the $\bff$-intensity process $\lambda$ (note that, as in \cite{DGQS2018}, we use here the convention that the intensity process $\lambda$ is $\bff$-progressively measurable and thus it necessarily vanishes after $\tau$). Then we have  $\langle M^1 \rangle_t=\langle  W \rangle_t=t$ and $\langle M^2 \rangle_t=\int_0^t \lambda_u\,du $ since $M^2$ is a purely discontinuous martingale so that, from Theorem 4.52 in Jacod and Shiryaev \cite{JS2003}, we obtain $[M^2]_t=\sum_{u\le t}(\Delta M^2_u)^2=\I_{\{\tau\le t\}}$.  Moreover, $M^1$ is a continuous martingale and $M^2$ is a purely discontinuous martingale and thus, by Corollary 4.55 in \cite{JS2003}, we have $[M^1,M^2]_t=\langle M^1,M^2 \rangle_t=0$. Hence, if we set $Q_t=t$, then \eqref{eq2.5} can be identified with a particular instance of condition \eqref{eq2.6}, which shows that a {\it $\lambda$-admissible driver} satisfies Definition \ref{def2.3}.}
\end{remark}

We adopt the following definition of a solution to the BSDE \eqref{BSDE1}.

\begin{definition} \label{def2.4}
{\rm  A {\it solution} to the BSDE \eqref{BSDE1} with data $(g,\eta,\UU)$ is a pair of stochastic processes $(Y,Z)\in\HHs$ such that equality
\eqref{BSDE1} is satisfied pathwise, in the sense that
\begin{align*}
\bpp\bigg(Y_t=\eta+\int_t^T g(u,Y_u,Z_u)\,dQ_u-\int_t^T Z_u^{\ast}\,dM_u-(\UU_T-\UU_t),\,\forall\,t\in [0,T]\bigg)=1.
\end{align*}
We say that the {\it uniqueness of a solution to} \eqref{BSDE1} holds if for any two solutions $(Y,Z)$ and $(Y',Z')$ to \eqref{BSDE1} we have
$\|(Y,Z)-(Y',Z')\|_{\HHs}=0$.}
\end{definition}

We henceforth study BSDEs with generators satisfying Definition \ref{def2.3} and data $(g,\eta,\UU)$ satisfying the following assumption.

\bhyp \label{ass2.3}
The triplet $(g,\eta,\UU)$ is such that: \hfill \break
(i) the generator $g$ is uniformly $m$-Lipschitz continuous and the process $g(\cdot,0,0)$ belongs to $\Hs$, \hfill \break
(ii) the random variable $\eta $ belongs to $\Ltg$, \hfill \break
(iii) the process $\UU$ belongs to $\Hs$ with $\UU_{T}\in \Ltg$.
\ehyp


\begin{remark}  \label{rem2.3}
{\rm Under Assumption \ref{ass2.3}, Definition \ref{def2.4} can be restated as follows: a pair $(Y,Z)\in\HHs$ is a solution to the BSDE \eqref{BSDE1} if
the process $Y-\UU$ is \cadlags  and, for every $t\in [0,T]$, equality \eqref{BSDE1x} holds $\bpp$-a.s..
In fact, we prove in Theorem \ref{the4.1} that, under Assumptions \ref{ass2.1}--\ref{ass2.3}, there exists a solution $(Y,Z)$ to the BSDE \eqref{BSDE1} such that $(Y-\UU,Z)$ belongs to the space $\SSs$. To this end, we first establish some useful {\it a priori} estimates in Proposition \ref{pro3.1}.}
\end{remark}

\section{A Priori Estimates}  \label{sec3}

We first derive some useful {\it a priori} estimates for solutions to BSDEs.

\begin{proposition} \label{pro3.1}
Let Assumptions \ref{ass2.1}--\ref{ass2.3} be satisfied by the martingale $M$ and the triplets $(g^l,\eta^l,\UU^l)$ for $l=1,2$.
For $l=1,2$, let $(Y^l,Z^l)$ be a solution to the BSDE
\begin{equation} \label{BSDEl}
\left\{\begin{array} [c]{ll}
dY_t^l=-g^l(t,Y^l_t,Z^l_t)\,dQ_t+Z^{l,\ast}_t\,dM_t+d\UU^l_t,\medskip\\
Y_{T}^l=\eta^l,
\end{array} \right.
\end{equation}
and let $y:=Y^1-Y^2,\,z:=Z^1-Z^2,\,\UU:=\UU^1-\UU^2,\,\eta:=\eta^1-\eta^2$ and $g:=g^1-g^2$. Then there exists a constant $C\ge 0$ such that
\begin{equation} \label{eq3.2}
\|y-\UU\|_{\Ss}^2+\|y\|_{\Hs}^2+\|z\|_{\Ls}^2\le CJ
\end{equation}
where
\begin{align*}
J:=\|\eta-\UU_{T}\|^2_{\Ltg}+\|\UU\|_{\Hs}^2+\|g(t,Y^2,Z^2)\|_{\Hs}^2<\infty .
\end{align*}
\end{proposition}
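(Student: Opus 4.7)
The natural first move is to remove the inhomogeneous driver $D$ from the dynamics: set $\tilde y := y - D$, which satisfies $\tilde y_T = \eta - D_T$ together with
\begin{equation*}
d\tilde y_t = -\bigl[g^1(t,Y^1_t,Z^1_t)-g^2(t,Y^2_t,Z^2_t)\bigr]\,dQ_t + z_t^{\ast}\,dM_t.
\end{equation*}
Rewriting the drift as $\bigl[g^1(t,Y^1,Z^1)-g^1(t,Y^2,Z^2)\bigr]+g(t,Y^2,Z^2)$, the uniform $m$-Lipschitz property of $g^1$ (Definition \ref{def2.3}) yields the pointwise bound
\begin{equation*}
\bigl|g^1(t,Y^1,Z^1)-g^2(t,Y^2,Z^2)\bigr|\le \widehat L\bigl(|y|+\|m_t z\|\bigr)+|g(t,Y^2,Z^2)|,
\end{equation*}
where $|y|\le |\tilde y|+|D|$.

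Next, I would apply It\^o's formula to $e^{\beta Q_t}\tilde y_t^{2}$ for a constant $\beta$ to be chosen, exploiting that $Q$ is continuous (so $\tilde y_{s-}\,dQ_s$ can be replaced by $\tilde y_s\,dQ_s$ under integration) and that the only jumps of $\tilde y$ come from the stochastic integral, so $[\tilde y]_t=\int_0^t z_s^{\ast}\,d[M]_s\,z_s$. Taking conditional expectations of the localised identity (a standard localisation kills the martingale part $\int e^{\beta Q_s}\tilde y_{s-}z_s^{\ast}\,dM_s$) and using $\E\bigl[\int_t^T e^{\beta Q_s}z_s^{\ast}\,d[M]_s\,z_s\bigr]=\E\bigl[\int_t^T e^{\beta Q_s}\|m_s z_s\|^2\,dQ_s\bigr]$, one obtains
\begin{equation*}
\E\bigl[e^{\beta Q_t}\tilde y_t^{2}\bigr]+\E\!\int_t^T e^{\beta Q_s}\|m_s z_s\|^2\,dQ_s
=\E\bigl[e^{\beta Q_T}(\eta-D_T)^2\bigr]+\E\!\int_t^T e^{\beta Q_s}\bigl(-\beta \tilde y_s^{2}+2\tilde y_s[g^1-g^2]\bigr)\,dQ_s.
\end{equation*}
A Young inequality $2ab\le \varepsilon a^2+\varepsilon^{-1}b^2$ applied to each of the three pieces of the cross-term lets me absorb $\tfrac12\|m_s z_s\|^2$ into the left-hand side and push the remaining $|\tilde y_s|^2$ contributions into the coefficient of $\beta$. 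Choosing $\beta$ large enough (depending only on $\widehat L$), the $\tilde y^2$ terms on the right disappear and, using $1\le e^{\beta Q_s}\le e^{\beta C_Q}$ from Assumption \ref{ass2.1}, I obtain
\begin{equation*}
\sup_{t\in[0,T]}\E[\tilde y_t^{2}]+\E\!\int_0^T\|m_s z_s\|^2\,dQ_s\le C_1\,J,
\end{equation*}
which controls $\|z\|_{\Ls}^{2}$ directly and yields $\|\tilde y\|_{\Hs}^{2}\le C_Q\sup_t\E[\tilde y_t^2]$, and then $\|y\|_{\Hs}^{2}\le 2\|\tilde y\|_{\Hs}^{2}+2\|D\|_{\Hs}^{2}\le C_2 J$.

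For the supremum estimate on $\tilde y=y-D$, I rewrite the BSDE in the form $\tilde y_t=(\eta-D_T)+\int_t^T[g^1-g^2]\,dQ_s-\int_t^T z_s^{\ast}\,dM_s$, so that
\begin{equation*}
\sup_{t\in[0,T]}|\tilde y_t|^{2}\le 3(\eta-D_T)^{2}+3Q_T\!\int_0^T\!(g^1-g^2)^{2}\,dQ_s+12\,\sup_{t\in[0,T]}\Bigl|\int_0^t z_s^{\ast}\,dM_s\Bigr|^{2},
\end{equation*}
where Cauchy--Schwarz against the bounded measure $dQ$ is used on the middle term. Taking expectations, the Burkholder--Davis--Gundy inequality bounds the last term by a constant times $\E\int_0^T\|m_s z_s\|^{2}\,dQ_s$, while the middle term is bounded by $C_Q\,\E\int_0^T(g^1-g^2)^{2}\,dQ\le C_3\bigl(\|g(\cdot,Y^2,Z^2)\|_{\Hs}^{2}+\|y\|_{\Hs}^{2}+\|z\|_{\Ls}^{2}\bigr)$, all of which have already been controlled by $J$. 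Combining, $\|y-D\|_{\Ss}^{2}\le C_4 J$, which together with the earlier bounds delivers \eqref{eq3.2}.

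The main technical nuisance will be the first step: because the Dol\'eans measure of $Q$ is not the Lebesgue measure, the usual exponential weight-and-Gronwall technology must be justified directly via It\^o applied to $e^{\beta Q_t}\tilde y_t^{2}$, and one must be careful that the quadratic-variation contribution is expressible via $\langle M\rangle$ (hence via $m\,dQ$) so that the negative $\|m z\|^{2}$ term actually appears on the correct side of the identity. The rest is bookkeeping with Young's inequality and the uniform bound $Q_T\le C_Q$ from Assumption \ref{ass2.1}.
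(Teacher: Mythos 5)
Your overall strategy is the paper's: subtract $\UU$ to get the driver-free dynamics for $\wt{y}:=y-\UU$, apply It\^o to $e^{\beta Q_t}\wt{y}_t^2$, use the $m$-Lipschitz property plus Young's inequality, and take $\beta$ large; your route to the $\Ss$-estimate (rewriting the equation and using Burkholder--Davis--Gundy on $\sup_t|\int_0^t z^*\,dM|$) differs harmlessly from the paper's, which keeps conditional expectations and reads the supremum bound off the resulting martingale. However, there is one step that fails as written: the claim $\|\wt{y}\|_{\Hs}^2\le C_Q\sup_{t}\E[\wt{y}_t^2]$. Since $Q$ is a \emph{random} increasing process, $\E\int_0^T\wt{y}_t^2\,dQ_t$ cannot be compared with $\int_0^T\E[\wt{y}_t^2]\,dQ_t$ by Fubini, and the inequality is false in general: if $\wt{y}^2$ has a narrow spike of height $N$ at a stopping time and $dQ$ concentrates its mass $C_Q$ exactly on that spike, then $\E\int_0^T\wt{y}_t^2\,dQ_t$ is of order $NC_Q$ while $C_Q\sup_t\E[\wt{y}_t^2]$ stays of order $C_Q$. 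This is precisely the kind of pitfall the paper is organized around ($dQ$ is not Lebesgue measure), so it cannot be waved through.

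The repair is exactly the paper's device: do not choose $\beta$ merely to cancel the $\wt{y}^2$ terms coming from Young's inequality, but take $\beta$ strictly larger (e.g.\ $\beta=2\wh{L}+2\wh{L}^2+2$) so that a positive multiple of $\E\int_t^T e^{\beta Q_u}\wt{y}_u^2\,dQ_u$ survives on the left-hand side of the energy estimate; this yields $\|\wt{y}\|_{\Hs}^2\le CJ$ directly, and then $\|y\|_{\Hs}^2\le 2\|\wt{y}\|_{\Hs}^2+2\|\UU\|_{\Hs}^2$ and the rest of your argument (the BDG step, which needs $\|y\|_{\Hs}$ or $\|\wt{y}\|_{\Hs}$ to control the middle term) goes through unchanged. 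Alternatively you could deduce the $\Hs$-bound from the $\Ss$-bound via $\E\int_0^T\wt{y}_t^2\,dQ_t\le C_Q\E[\sup_t\wt{y}_t^2]$, but as you have set things up the $\Ss$-estimate itself invokes $\|y\|_{\Hs}^2\le CJ$, so without the surplus-$\beta$ fix your argument is circular.
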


\proof
We write $\wt{Y}^i=Y^i-\UU^i$, $\wt{Z}^i=Z^i$, $\wt{\eta}^i=\eta^i-\UU_T^i$, $\wt{g}^i(t,y,z):=g^i(t,y+\UU_t^i,z)$,  $\wt{y}:=\wt{Y}^1-\wt{Y}^2,\,\wt{z}:=\wt{Z}^1-\wt{Z}^2,\,\UU:=\UU^1-\UU^2$ and $\wt{\eta}:=\wt{\eta}^1-\wt{\eta}^2$.
Then we have
\begin{equation} \label{eq3.3}
\left\{ \begin{array} [c]{ll}
d\wt{y}_t=-\deltag_t\,dQ_t+\wt{z}^*_t\,dM_t,\medskip\\ \wt{y}_T=\wt{\eta},
\end{array} \right.
\end{equation}
where $\deltag_t:=\wt{g}^1(t,\wt{Y}^1_t,\wt{Z}^1_t)-\wt{g}^2(t,\wt{Y}^2_t,\wt{Z}^2_t)$.
Since $[e^{\beta Q},\wt{y}^2]=0$ by virtue of Proposition 4.49 in Chapter I of \cite{JS2003}, the It\^o integration by parts formula gives
\begin{align*}
d(e^{\beta Q_t} \wt{y}^2_t)=\wt{y}^2_t\,de^{\beta Q_t}+e^{\beta Q_t}\,d\wt{y}^2_t=\wt{y}^2_t\beta e^{\beta Q_t}\,dQ_t+e^{\beta Q_t}\,d\wt{y}^2_t
\end{align*}
where
\begin{align*}
d\wt{y}^2_t=2\wt{y}_{t-}\,d\wt{y}_t+d[\wt{y}]_t=2\wt{y}_{t-}\,d\wt{y}_t+dN_t+d\langle \wt{y} \rangle_t
\end{align*}
and the process $N:=[\wt{y}]-\langle \wt{y}\rangle $ is an $\bff$-martingale. In view of \eqref{eq3.3}, we have
\begin{align*}
\wt{y}_{t-}\,d\wt{y}_t=- \wt{y}_t \deltag_t\,dQ_t+ \wt{y}_{t-} z^*_t\, dM_t
\end{align*}
and $d\langle \wt{y} \rangle_t=\|m_t \wt{z}_t\|^2\,dQ_t$. By integrating from $t$ to $T$ and taking the conditional expectation with respect to $\cG_t$, we obtain
\begin{equation} \label{eq3.4}
\begin{split}
&e^{\beta Q_t}\wt{y}^2_t+\beta\,\E_t\bigg[\int_t^T e^{\beta Q_u}\wt{y}^2_u\,dQ_u\bigg]+\E_t\bigg[\int_t^T e^{\beta Q_u}\,\|m_u\wt{z}_u\|^2\,dQ_u\bigg]\\
&=\E_t\Big[e^{\beta Q_T}\wt{\eta}^2\Big]+2\,\E_t\bigg[\int_t^Te^{\beta Q_u}\wt{y}_u\deltag_u\,dQ_u\bigg].
\end{split}
\end{equation}
Let us denote
\begin{align*}
\wt{g}_t:=\wt{g}^1(t,\wt{Y}^2_t,\wt{Z}^2_t)-\wt{g}^2(t,\wt{Y}^2_t,\wt{Z}^2_t)=g^1(t,Y^2_t-\UU^2_t+\UU^1_t,Z^2_t)-g^2(t,Y^2_t,Z^2_t).
\end{align*}
Since $|\deltag_t|\le\wh{L}_1(|\wt{y}_t|+\|m_t\wt{z}_t\|)+|\wt{g}_t|$, we deduce from \eqref{eq3.4} that
\begin{align*}
e^{\beta Q_t}\wt{y}^2_t&+ \beta\,\E_t\bigg[\int_t^T e^{\beta Q_u}\wt{y}^2_u\,dQ_u\bigg]+\E_t\bigg[\int_t^T e^{\beta Q_u}\|m_u\wt{z}_u\|^2\,dQ_u\bigg] \\
&\le \E_t\Big[e^{\beta Q_T}\wt{\eta}^2\Big]+2\wh{L}_1\,\E_t\bigg[\int_t^T e^{\beta Q_u}\wt{y}^2_u\,dQ_u\bigg]+
2\,\E_t \bigg[\int_t^T e^{\beta Q_u}|\wt{y}_u|\big(\wh{L}_1\|m_u\wt{z}_u\|+|\wt{g}_u| \big)\,dQ_u\bigg] \\
&\le\E_t\Big[e^{\beta Q_T}\wt{\eta}^2\Big]+(2\wh{L}_1+2\wh{L}_1^2+1)\,\E_t\bigg[\int_t^T e^{\beta Q_u}\wt{y}^2_u\,dQ_u\bigg]\\
&+ \E_t \bigg[\int_t^T e^{\beta Q_u}\Big(\frac{1}{2}\|m_u\wt{z}_u\|^2+ |\wt{g}_u|^2 \big)\Big)\,dQ_u\bigg].
\end{align*}
We now take $\beta=2 \wh{L}_1+2\wh{L}_1^2+2$ and rearrange the above equation to obtain
\begin{align*}
e^{\beta Q_t} \wt{y}^2_t+\frac{1}{2}\,\E_t \bigg[\int_t^T e^{\beta Q_u}\|m_u\wt{z}_u\|^2\,dQ_u\bigg]
\le  \E\big(e^{\beta Q_T}\wt{\eta}^2\big)+\E\bigg[\int_t^T e^{\beta Q_t}|\wt{g}_t|^2\,dQ_t\bigg].
\end{align*}
 Then, recalling also that $Q$ is bounded and taking $t=0$, we obtain
the existence of a constant $C\ge 0$  (note that the value of $C$ may change from place to place) such that
\begin{align*}
\|\wt{y}\|^2_{\Ss}+\|\wt{y}\|_{\Hs}^2+\|\wt{z}\|_{\Ls}^2\le C\bigg[\E\big(\wt{\eta}^2\big)+\E\bigg(\int_0^T|\wt{g}_t|^2\,dQ_t\bigg)\bigg]
\end{align*}
where we have also used the property that $\wt{y}$ is an \cadlags  process.
Noticing that $|\wt{g}_t|^2\le 2|g(t,Y_t^2,Z_t^2)|^2+2\wh{L}_1^2|\UU_t|^2$, we get
\begin{align*}
\|\wt{y}\|^2_{\Ss}+\|\wt{y}\|_{\Hs}^2+\|\wt{z}\|_{\Ls}^2\le CJ,
\end{align*}
which in turn implies that (recalling that $\wt{y}=y-\UU$ and $\wt{z}=z$)
\begin{align*}
\| y-\UU\|^2_{\Ss}+\|y\|_{\Hs}^2+\|z\|_{\Ls}^2\le CJ.
\end{align*}
Finally, it is easy to check that $J$ is finite.
\endproof

\begin{remark} \label{rem3.1} {\rm
For later use, it is worth noting that \eqref{eq3.2} implies that if $|\eta^1-\eta^2|\le \varepsilon $ for some positive constant $\varepsilon$,
then the solutions $Y^1$ and $Y^2$ to the BSDE \eqref{BSDE1} with terminal conditions $\eta^1$ and $\eta^2$, respectively,
satisfy $|Y^1_0-Y^2_0|\le C \varepsilon $ for some constant $C$.}
\end{remark}

\section{Existence and Uniqueness Theorem for BSDEs}  \label{sec4}

To establish the existence and uniqueness of solutions to the BSDE \eqref{BSDE1}, it is useful to introduce several auxiliary normed spaces
of stochastic processes indexed by a parameter $\beta \ge 0$. First, for any fixed $\beta \ge 0$, we denote by $\Lb$ the class of all
$\brr^{\dimn}$-valued, $\bff$-predictable processes $Z$ with the pseudo-norm $\|\cdot\|_{\Lb}$ given by
\begin{align*}
\|Z\|_{\Lb}^2:=\E \bigg[\int_0^T e^{\beta Q_t}\| m_tZ_t\|^2\,dQ_t\bigg]<\infty.
\end{align*}
Hence $\Ls=\cL^2_{0}(M)$ and the pseudo-norms $\|\cdot\|_{\Ls}$ and $\|\cdot\|_{\Lb}$ are equivalent for every $\beta >0$ since the nondecreasing, nonnegative process $Q$ is bounded.

Second, we denote by $\Hb$ the space of all real-valued, $\bff$-progressively measurable processes $X$ with the pseudo-norm  $\|\cdot \|_{\Hb}$ given by
\begin{align*}
\|X\|_{\Hb}^2:=\E \bigg[\int_0^T e^{\beta Q_t}X^2_t\,dQ_t\bigg]<\infty.
\end{align*}

Third, we introduce the space $\Sb$ of all real-valued, \cadlag, $\bff$-adapted processes $X$ with the norm $\|\cdot \|_{\cS_{\beta}^2}$ given by
\begin{align*}
\|X\|_{\Sb}^2:=\E \bigg[\sup_{t\in [0,T]}\big(e^{\beta Q_t} X^2_t\big)\bigg]<\infty.
\end{align*}
It is clear that $\Ss=\cS_{0}^2$ and the norms $\|\cdot\|_{\Ss}$ and $\|\cdot\|_{\Sb}$ are equivalent for every $\beta >0$.

Finally, for a fixed $\beta \ge 0$, we denote by $\SSb$ the Banach space $(\Sb\times\Lb,\|\cdot\|_\beta)$ where the pseudo-norm $\|\cdot\|_\beta$ is given by
\begin{align*}
\|(w,v)\|^2_\beta:=\|w\|^2_{\Sb}+\|v\|^2_{\Lb}
\end{align*}
and we note that $\SSs=\mathbb{S}^2_{0}$. We stress that, in view of Assumption \ref{ass2.1}, the class $\SSb$ of stochastic processes is independent of the choice of $\beta \ge 0$ and thus the set $\SSb$ can be formally identified with $\SSs $,  although the pseudo-norm $\|\cdot\|_\beta$ depends on $\beta$ and thus it is advantageous to consider a whole family of pseudo-norms when searching for a solution to the BSDE \eqref{BSDE1} through a contraction mapping on $\SSb$ for some $\beta >0$.

Notice that equation \eqref{BSDE1x} can also be rewritten as follows
\begin{align*}
Y_t -\UU_t=\eta-\UU_T+\int_t^T\wt{g}(t,Y_u-\UU_u,Z_u)\,dQ_u-\int_t^TZ_u^{\ast}\,dM_u
\end{align*}
where we set $\wt{g}(t,y,z) :=g(t,y+\UU_t,z)$ for all $(\omega, t,y,z)\in \Omega \times [0,T]\times \brr \times \brr^{\dimn}$.
To simplify the presentation, we find it convenient to further transform the BSDE \eqref{BSDE1x} by setting
$\wt{Y}:=Y-\UU,\, \wtetab :=\etab-\UU_T$ and $\wt{Z}=Z$. Then we obtain the transformed BSDE with modified data $(\wt{g},\wt{\eta},0)$
\begin{align} \label{BSDE2}
\wt{Y}_t=\wt{\eta}+\int_t^T\wt{g}(u,\wt{Y}_u,\wt{Z}_u)\,dQ_u-\int_t^T\wt{Z}_u^{\ast}\,dM_u.
\end{align}
It is easy to see that, under Assumption \ref{ass2.3}, the random variable $\wt{\eta }$ belongs to $\Ltg$, the modified generator
$\wt{g}$ is uniformly $m$-Lipschitz continuous with a constant $\wh{L}$ and the process $\wt{g}(\cdot,0,0)$ belongs to $\Hs $.
Consistently with Definition \ref{def2.4} and Remark \ref{rem2.3}, we now search for a solution $(\wt{Y},\wt{Z})\in \HHs$ to the BSDE \eqref{BSDE2}
(in particular, $\wt{Y}$ should be an \cadlags  process). It is readily seen that the following lemma is valid.

\begin{lemma} \label{lem4.1}
Under Assumption \ref{ass2.3}, the existence and uniqueness of a solution $(\wt{Y},\wt{Z})$ to the transformed BSDE \eqref{BSDE2} is equivalent to the existence and uniqueness of a solution $(Y,Z)$ to the BSDE \eqref{BSDE1} and the respective solutions are related to each other through the mapping $(Y,Z)=(\wt{Y}+\UU, \wt{Z})$.
\end{lemma}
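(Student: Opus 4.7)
The plan is to simply exhibit the bijection $\Phi\colon(Y,Z)\mapsto(Y-\UU,Z)$ between solutions of \eqref{BSDE1} and solutions of \eqref{BSDE2}, and verify that it preserves all the relevant structure (the space $\HHs$, Assumption \ref{ass2.3} on the data, and pseudo-norm differences). Since the change of variables is affine and does not act on $Z$, no technical obstacle is anticipated; the proof amounts to a routine substitution and a check that each ingredient of Assumption \ref{ass2.3} carries over.

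First, I would verify that $\Phi$ maps $\HHs$ to itself. Only the first component requires attention, and $\wt{Y}:=Y-\UU\in\Hs$ follows at once from $Y\in\Hs$ together with $\UU\in\Hs$ (Assumption \ref{ass2.3}(iii)) by the triangle inequality. The same argument applies to the inverse $\Phi^{-1}\colon(\wt{Y},\wt{Z})\mapsto(\wt{Y}+\UU,\wt{Z})$.

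Second, I would substitute $Y=\wt{Y}+\UU$, $Z=\wt{Z}$ and $\eta=\wt{\eta}+\UU_T$ into the integral form \eqref{BSDE1x}; the terms involving $\UU$ cancel on both sides and the identity $g(u,Y_u,Z_u)=g(u,\wt{Y}_u+\UU_u,\wt{Z}_u)=\wt{g}(u,\wt{Y}_u,\wt{Z}_u)$ yields exactly \eqref{BSDE2}. The reverse substitution is identical and shows that $\Phi$ is a bijection between the respective solution sets, with a single pathwise equality being equivalent to the other.

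Third, I would check that the transformed data $(\wt{g},\wt{\eta},0)$ satisfies Assumption \ref{ass2.3} whenever $(g,\eta,\UU)$ does. The uniform $m$-Lipschitz property of $\wt{g}$ with the same constant $\wh{L}$ is immediate, since the $\UU_t$-shift in the $y$-variable does not affect any Lipschitz estimate. To see $\wt{g}(\cdot,0,0)\in\Hs$ one uses
\begin{equation*}
|\wt{g}(t,0,0)|=|g(t,\UU_t,0)|\le|g(t,0,0)|+\wh{L}\,|\UU_t|,
\end{equation*}
which belongs to $\Hs$ by Assumption \ref{ass2.3}(i),(iii); similarly $\wt{\eta}=\eta-\UU_T\in\Ltg$ since both $\eta$ and $\UU_T$ lie in $\Ltg$.

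Finally, for uniqueness: if $(Y^1,Z^1)$ and $(Y^2,Z^2)$ are two solutions to \eqref{BSDE1}, their images $\Phi(Y^i,Z^i)$ are two solutions to \eqref{BSDE2} with the same data, and the $\UU$-terms cancel in the difference so that
\begin{equation*}
\|(Y^1,Z^1)-(Y^2,Z^2)\|_{\HHs}=\|\Phi(Y^1,Z^1)-\Phi(Y^2,Z^2)\|_{\HHs}.
\end{equation*}
Hence one norm difference vanishes iff the other does, and the two uniqueness statements are equivalent. This concludes the plan; the only step deserving a brief word is the verification that $\wt{g}(\cdot,0,0)\in\Hs$, which relies crucially on combining the $m$-Lipschitz bound with the hypothesis $\UU\in\Hs$.
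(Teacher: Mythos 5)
Your proposal is correct and follows essentially the same route as the paper, which treats the lemma as immediate from the substitution $\wt{Y}=Y-\UU$, $\wt{Z}=Z$, $\wt{\eta}=\eta-\UU_T$, $\wt{g}(t,y,z)=g(t,y+\UU_t,z)$ and notes, just as you verify, that $\wt{\eta}\in\Ltg$, $\wt{g}$ is uniformly $m$-Lipschitz with the same constant, and $\wt{g}(\cdot,0,0)\in\Hs$ (via the bound $|g(t,\UU_t,0)|\le|g(t,0,0)|+\wh{L}|\UU_t|$ and $\UU\in\Hs$). Your explicit check that the bijection preserves the $\HHs$ pseudo-norm of differences, hence the equivalence of the uniqueness statements, is exactly the routine verification the paper leaves to the reader.
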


We are now ready to prove the existence and uniqueness theorem for solutions to the BSDE \eqref{BSDE1} with generator satisfying the uniform $m$-Lipschitz condition.

\begin{theorem} \label{the4.1}
If Assumptions \ref{ass2.1}--\ref{ass2.3} are valid, then there exists a solution $(Y,Z)\in \HHs$ to the BSDE \eqref{BSDE1} such that the process $Y-\UU$ belongs to $\Ss$. Moreover, the uniqueness of a solution $(Y,Z)$ holds in the space $\HHs$ and the uniqueness of $(Y-\UU,Z)$ holds in $\SSs $.
\end{theorem}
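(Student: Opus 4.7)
My approach is a Banach fixed point argument on the space $\SSb$ for a sufficiently large $\beta > 0$, combined with the reduction supplied by Lemma \ref{lem4.1}. It suffices to build a unique solution $(\wt{Y},\wt{Z}) \in \HHs$ to the transformed BSDE \eqref{BSDE2} with $\wt{Y} \in \Ss$, after which we recover $(Y,Z) := (\wt{Y} + \UU, \wt{Z})$. For each $(y,z) \in \SSb$ I would define $\Phi(y,z) := (\wt{Y},\wt{Z})$ as follows. The $m$-Lipschitz property of $\wt{g}$ together with $\wt{g}(\cdot,0,0) \in \Hs$ and the boundedness of $Q$ ensures that $\xi := \wtetab + \int_0^T \wt{g}(u, y_u, z_u)\,dQ_u$ belongs to $\Ltg$. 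Assumption \ref{ass2.2} then yields a unique $\wt{Z} \in \Ls$ with $\E[\xi \mid \cG_t] = \E[\xi] + \int_0^t \wt{Z}_u^{\ast}\,dM_u$, and I would set $\wt{Y}_t := \E[\xi \mid \cG_t] - \int_0^t \wt{g}(u,y_u,z_u)\,dQ_u$. Continuity of $Q$ together with the RCLL property of the conditional expectation martingale ensure that $\wt{Y}$ is RCLL, and Doob's $L^2$ inequality gives $\wt{Y} \in \Ss$; direct verification confirms that $(\wt{Y},\wt{Z})$ satisfies \eqref{BSDE2} pathwise.

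The core step is to show that $\Phi$ is a contraction on $\SSb$ when $\beta$ is taken large enough. Given two inputs $(y^l,z^l) \mapsto (\wt{Y}^l,\wt{Z}^l)$ for $l=1,2$, I would apply the It\^o integration-by-parts formula to $e^{\beta Q_t}(\wt{Y}^1_t - \wt{Y}^2_t)^2$ along the lines of the proof of Proposition \ref{pro3.1}, with the generator difference $\wt{g}(t,y^1_t,z^1_t) - \wt{g}(t,y^2_t,z^2_t)$ controlled via the $m$-Lipschitz condition. A choice of $\beta$ polynomial in $\wh{L}$ (in the spirit of $\beta = 2\wh{L} + 2\wh{L}^2 + 2$ from the proof of Proposition \ref{pro3.1}) absorbs the nonlinear terms and yields an estimate bounding $\|\wt{Z}^1-\wt{Z}^2\|^2_{\Lb}$ and $\|\wt{Y}^1-\wt{Y}^2\|^2_{\Hb}$ by a small fraction of $\|y^1-y^2\|^2_{\Hb} + \|z^1-z^2\|^2_{\Lb}$. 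To upgrade this into a genuine contraction in $\|\cdot\|_\beta$ I would apply the Burkholder--Davis--Gundy inequality to the martingale part of $\wt{Y}^1 - \wt{Y}^2$ and exploit the bounds $e^{\beta Q_t} \le e^{\beta C_Q}$ and $\|\cdot\|^2_{\Hb} \le C_Q \|\cdot\|^2_{\Sb}$, both of which rely crucially on the boundedness of $Q$, to arrive at a contraction constant strictly less than $1$.

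The main obstacle is precisely this passage from the natural weighted $L^2$-type estimate to a supremum-norm estimate while preserving a contraction factor below $1$: as emphasized in the introduction, a pure $\HHb$-contraction would only deliver a fixed point modulo $\mu_Q$-null sets and would thereby fail to produce an RCLL solution or uniqueness in the indistinguishability sense. Once the contraction on the Banach space $\SSb$ is established, its unique fixed point $(\wt{Y},\wt{Z})$ solves \eqref{BSDE2} with $\wt{Y} \in \Ss$. Uniqueness of $(Y,Z)$ in $\HHs$ and of $(Y-\UU,Z)$ in $\SSs$ is then an immediate consequence of Proposition \ref{pro3.1}: specializing \eqref{eq3.2} to $(g^1,\eta^1,\UU^1) = (g^2,\eta^2,\UU^2)$ forces $\|y-\UU\|_{\Ss}^2 + \|y\|_{\Hs}^2 + \|z\|_{\Ls}^2 = 0$ for the difference of any two solutions. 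Finally, Lemma \ref{lem4.1} pulls these conclusions back to the original BSDE \eqref{BSDE1}.
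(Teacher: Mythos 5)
Your proposal is correct and follows essentially the same route as the paper: freeze the generator, solve the resulting simple BSDE via the predictable representation property, and run a Banach fixed point argument on $\SSb$, with the weighted It\^o estimate upgraded to the supremum norm through the Davis/BDG inequality and uniqueness obtained from Proposition \ref{pro3.1}. The only caution is that the contraction constant must arise as $C\beta^{-1}\wh{L}^2(C_Q+1)$ via $\|\cdot\|^2_{\Hb}\le C_Q\|\cdot\|^2_{\Sb}$ (as in the paper), and not through the bound $e^{\beta Q_t}\le e^{\beta C_Q}$, which would destroy the smallness of the constant as $\beta\to\infty$.
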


\begin{proof}
It is clear that both statements about uniqueness are immediate consequences of inequality \eqref{eq3.2} in Proposition \ref{pro3.1} since if $(Y,Z)$ and $(Y',Z')$ are two solutions to the BSDE \eqref{BSDE1}, then we have $g^1=g^2,\, \UU^1=\UU^2$ and $\eta^1=\eta^2$ so that \eqref{eq3.2} yields
\begin{align*}
\|Y-Y'\|_{\Ss}^2+\|Y-Y'\|_{\Hs}^2+\|Z-Z'\|_{\Ls}^2=0
\end{align*}
where we observe that  $Y-Y'=(Y-\UU)-(Y'-\UU)$. Hence it is enough to prove the first assertion.
In view of Lemma \ref{lem4.1} and since $\UU$ belongs to $\Hs$, it would be possible to assume, without loss of generality, that $\UU=0$.

In the first step, for a given pair $(w,v) \in \SSb$, we examine the following BSDE
\begin{equation} \label{eq4.2}
\left\{\begin{array} [c]{ll}
dY_t=-g_t\,dQ_t+Z_t^{\ast}\,dM_t  ,\medskip\\
Y_{T}=\eta ,
\end{array} \right.
\end{equation}
where $g_t :=g(t,w_t,v_t)$.  To establish the existence of a unique solution to \eqref{eq4.2}, it suffices to apply the postulated martingale representation property of $M$ to the square-integrable $\bff$-martingale $N_t :=\E\left[\eta+\int_0^{T}g_u\,dQ_u\big|\cG_t\right]$. Due to Assumption \ref{ass2.2}, there exists a process $Z \in \Ls$ such that, for all $t\in [0,T]$,
\begin{align*}
N_t=\E\bigg[\eta+\int_0^T g_u\,dQ_u\,\Big|\,\cG_t\bigg]=\int_0^t Z^*_u\,dM_u
\end{align*}
and the uniqueness of $Z$ in $\|\cdot \|_{\Ls}$ holds. We define the \cadlag, $\bff$-adapted process $Y$ by setting
\begin{align*}
Y_t:=\E\bigg[\eta+\int_t^T g_u\,dQ_u\,\Big|\,\cG_t\bigg]=N_t-\int_0^t g_u\,dQ_u=\int_0^t Z^*_u\,dM_u-\int_0^t g_u\,dQ_u.
\end{align*}
Then we have, for all $t \in [0,T]$,
\begin{align*}
Y_t=\eta+\int_t^T g_u\,dQ_u-\int_t^T Z_u^{\ast}\,dM_u.
\end{align*}
It is now easy to check that the pair $(Y,Z)$ belongs to $\SSb $ for any $\beta>0$ and is a solution to the BSDE \eqref{eq4.2}. Furthermore, the uniqueness of a solution to \eqref{eq4.2} in $\HHb $ (and in $\SSb$) also follows from inequality \eqref{eq3.2} in Proposition \ref{pro3.1}.

In the second step, for any $(w, v)\in \SSb$, we denote by $(Y^{w,v},Z^{w,v}) \in \SSb$ the unique solution to \eqref{eq4.2} with $g_t :=g(t,w_t,v_t)$ and
we define the mapping $\Psi : \SSb \rightarrow \SSb$ as follows: for any $(w, v) \in \SSb$, we set $\Psi(w,v)=(Y^{w,v},Z^{w,v})$. We claim that $\Psi$ is a contraction mapping on the Banach space $\SSb$, provided that $\beta $ is sufficiently large. To this end, we proceed as follows: for $l=1,2$,  we assume that $(Y^l, Z^l)$ is a solution to BSDE \eqref{eq4.2} with generator $g^l_t:=g(t,w^l_t,v^l_t)$ where $(w^l,v^l) \in \SSb$,  and we show that for an arbitrary $\beta> 0$
\begin{equation} \label{eq4.3}
\|y\|^2_{\Sb}+\|z\|^2_{\Lb}\le 1158 \beta^{-1}\wh{L}^2 (C_Q+1)\big(\|w\|_{\Sb}^2+\|v\|^2_{\Lb}\big)
\end{equation}
where $y:=Y^1-Y^2,\,z:=Z^1-Z^2,\,w:=w^1-w^2$ and $v:=v^1-v^2$.

Assuming temporarily that \eqref{eq4.3} is valid, it is immediate from \eqref{eq4.3} that $\Psi : \SSb \rightarrow \SSb$ is a contraction if we take $\beta$ sufficiently large. Then, by the Banach theorem, there exists a unique fixed point $(Y,Z)\in\SSb$ of $\Psi$. Furthermore, it is not hard to see that $Y\in\Hs$, due to the boundedness of $Q$ (when $\UU\neq 0$, we also have that $Y\in\Hs$ since $\UU\in\Hs$).  Thus the above fixed point $(Y,Z)$ is a solution to BSDE  \eqref{BSDE2} in $\SSs$ and thus also in $\HHs$.

It is now clear that to complete the proof, it suffices to establish \eqref{eq4.3}. By taking the difference of solutions to BSDEs with generator $g^l_t:=g(t,w^l_t,v^l_t)$ for $l=1,2$, we obtain
\begin{equation*}
\left\{ \begin{array} [c]{ll}
dy_t=-\deltag_t\,dQ_t+z^*_t\,dM_t,\medskip\\ y_T=0,
\end{array} \right.
\end{equation*}
where $\deltag_t:=g(t,w^1_t,v^1_t)-g(t,w^2_t, v^2_t)$. From the first step, it is clear that for an arbitrary $\beta >0$
\begin{equation}  \label{eq4.4}
\E\bigg[\sup_{t\in[0,T]}\big( e^{\beta Q_t}y_t^2\big)\bigg]=\|y\|^2_{\Sb}\le\|Y^1\|^2_{\Sb}+\|Y^2\|^2_{\Sb}<\infty
\end{equation}
since $Y^1$ and $Y^2$ belong to $\Ss $ and thus also to $\Sb$. The It\^o integration by parts formula gives
\begin{align*}
d(e^{\beta Q_t}y^2_t)=y^2_t\,de^{\beta Q_t}+e^{\beta Q_t}\,dy^2_t=y^2_t\beta e^{\beta Q_t}\,dQ_t+e^{\beta Q_t}\,dy^2_t
\end{align*}
where
\begin{align*}
dy^2_t=2 y_{t-}\,dy_t+d[y]_t=2y_{t-}\,dy_t+dN_t+d\langle y \rangle_t
\end{align*}
and $N:=[y]-\langle y \rangle$. Consequently,
\begin{equation} \label{eq4.5}
\begin{split}
d(e^{\beta Q_t} y^2_t)&=e^{\beta Q_t}\big(\beta y^2_t\,dQ_t-2y_{t-}\deltag_t\,dQ_t+2y_{t-}z^*_t\,dM_t+d[y]_t\big)\\
&=e^{\beta Q_t}\big(\beta y^2_t\, dQ_t-2y_{t-}\deltag_t\,dQ_t+2y_{t-}z^*_t\,dM_t+dN_t+d\langle y \rangle_t\big).
\end{split}
\end{equation}
To complete the derivation of \eqref{eq4.3}, we first show that
\begin{equation} \label{eq4.6}
\|z\|^2_{\Lb}\le 2\beta^{-1}\wh{L}^2(C_Q+1)\big(\|w\|_{\Sb}^2+\|v\|^2_{\Lb}\big).
\end{equation}
After integrating \eqref{eq4.5} from $t$ to $T$,  taking the expectation and using the fact that $N$ is $\bff$-martingale
and $d\langle y \rangle_t=\|m_t z_t\|^2\,dQ_t$, we obtain
\begin{align*}
&\E\bigg[e^{\beta Q_t}y^2_t+\beta\,\int_t^T e^{\beta Q_u}y^2_u\,dQ_u+\int_t^T e^{\beta Q_u}\,\|m_uz_u\|^2\,dQ_u\bigg]
\le 2\,\E\bigg[\int_t^Te^{\beta Q_u}y_u\deltag_u\,dQ_u\bigg]\\
&\le 2\wh{L}\, \E\bigg[\int_t^Te^{\beta Q_u}|y_u|(|w_u|+\|m_uv_u\|)\,dQ_u\bigg]\\
&\le 2\lambda^{-1} \wh{L}^2\,\E\bigg[\int_t^Te^{\beta Q_u}y^2_u\,dQ_u\bigg]+\lambda\,\E\bigg[\int_t^Te^{\beta Q_u}|w_u|^2\,dQ_u\bigg]
+\lambda\,\E\bigg[\int_t^Te^{\beta Q_u}\|m_uv_u\|^2\,dQ_u\bigg].
\end{align*}
Let us take $\lambda=2\beta^{-1}\wh{L}^2$. Then we get
\begin{align*}
\E \bigg[\int_0^T e^{\beta Q_u}\,\|m_uz_u\|^2\,dQ_u\bigg]\le 2\beta^{-1}\wh{L}^2\,\E\bigg[\int_0^Te^{\beta Q_u} |w_u|^2 \,dQ_u\bigg]
+2\beta^{-1}\wh{L}^2\,\E\bigg[\int_0^Te^{\beta Q_u}\|m_uv_u\|^2\,dQ_u\bigg],
\end{align*}
which gives \eqref{eq4.6} since $Q$ is a nonnegative, nondecreasing process bounded by  $C_Q$.

In the remainder of the proof, we analyze the norm $\|y\|_{\Sb}$ and we show that
\begin{equation}  \label{eq4.7}
\|y\|^2_{\Sb}\le 1156 \beta^{-1}\wh{L}^2(C_Q+1)\big(\|w\|_{\Sb}^2+\|v\|^2_{\Lb}\big).
\end{equation}
Using again \eqref{eq4.5} and noticing that the process $[y]$ is nondecreasing, we get for any $\lambda>0$ (recall that $\deltag_t:=g(t,w^1_t,v^1_t)-g(t,w^2_t, v^2_t)$
and $g$ is uniformly $m$-Lipschitz continuous)
\begin{align*}
&e^{\beta Q_t}y^2_t+\beta\,\int_t^T e^{\beta Q_u}y^2_u\,dQ_u\le 2\,\int_t^Te^{\beta Q_u}y_u\deltag_u\,dQ_u-2\int_t^Te^{\beta Q_u}y_{u-}z^*_u\,dM_u\\
&\le 2\lambda^{-1}\wh{L}^2\int_t^Te^{\beta Q_u}y^2_u\,dQ_u+\lambda\int_t^Te^{\beta Q_u}|w_u|^2\,dQ_u+\lambda\int_t^Te^{\beta Q_u}\|m_uv_u\|^2\,dQ_u
-2\int_t^Te^{\beta Q_u}y_{u-}z^*_u\,dM_u ,
\end{align*}
which in turn yields, by taking $\lambda=2\beta^{-1}\wh{L}^2$
\begin{equation} \label{eq4.8}
\begin{split}
&e^{\beta Q_t}y^2_t\le\lambda\int_t^Te^{\beta Q_u} |w_u|^2\,dQ_u+\lambda\int_t^Te^{\beta Q_u}\|m_uv_u\|^2\,dQ_u-2\int_t^Te^{\beta Q_u}y_{u-}z^*_u\,dM_u\\
&\le\lambda(C_Q+1)\left(\sup_{t\in[0,T]}\big(e^{\beta Q_t}|w_t|^2\big)+\int_t^Te^{\beta Q_u}\|m_uv_u\|^2\,dQ_u\right)-2\int_t^Te^{\beta Q_u}y_{u-}z^*_u\,dM_u .
\end{split}
\end{equation}
Let us denote $\wt{y}_u=e^{\frac{\beta}{2} Q_u}y_u$ and $\wt{M}_t :=\int_0^t e^{\frac{\beta}{2}Q_u}z^{\ast}_u\,dM_u$. The Davis inequality (see, e.g.,
Theorem 10.24 in He et al. \cite{HWY1992} or Theorem 11.5.5 in Cohen and Elliott \cite{CE2015} with $p=1$) gives
\begin{align*}
\E\left[\sup_{t\in[0,T]}\Big|\int_0^t\wt{y}_{u-}\,d\wt{M}_u\Big|\right]\le
2\sqrt{6}\,\E\left[ \bigg( \int_0^T\wt{y}_{u-}^2\,d[\wt{M}]_u\bigg)^{1/2}\right]\le
6\,\E\left[\bigg(\int_0^T\wt{y}_{u-}^2\,d[\wt{M}]_u\bigg)^{1/2}\right].
\end{align*}
Consequently, using also the elementary inequalities
\begin{align*}
\E\left[\bigg(\int_0^T\wt{y}_{u-}^2\,d[\wt{M}]_u\bigg)^{1/2}\right]
&\le\E\bigg[[\wt{M}]^{\frac{1}{2}}_T \sup_{t\in[0,T]}|\wt{y}_t|\bigg]
\le\frac{1}{48}\,\E\bigg[\sup_{t\in[0,T]}\wt{y}^2_t\bigg]+ 12\,\E\big([\wt{M}]_T\big)
\\&=\frac{1}{48}\,\E\bigg[\sup_{t\in[0,T]}\wt{y}^2_t\bigg]+12\,\E\big(\langle \wt{M}\rangle_T\big),
\end{align*}
we obtain
\begin{align*}
\E\left[\sup_{t\in[0,T]}\Big|\int_0^t\wt{y}_{u-}\,d\wt{M}_u \Big|\right]\le\frac{1}{8}\,\E\bigg[\sup_{t\in[0,T]}\wt{y}^2_t\bigg]
+72\,\E\big(\langle \wt{M}\rangle_T\big),
\end{align*}
which can be rewritten more explicitly as follows
\begin{align*}
\E\left[\sup_{t\in[0,T]}\Big|\int_0^t e^{\beta Q_u}y_{u-}z^*_u\,dM_u\Big|\right]\le
\frac{1}{8}\,\E\Big[\sup_{t\in[0,T]}\big(e^{\beta Q_t}y^2_t\big)\Big]+ 72\,\E \bigg[\int_0^T e^{\beta Q_u}\,\|m_uz_u\|^2\,dQ_u\bigg].
\end{align*}
Therefore, using also already proven inequality \eqref{eq4.6}, we find that
\begin{align*}
&\E\left[\sup_{t\in[0,T]}\Big|\int_t^Te^{\beta Q_u}y_{u-}z^*_u\,dM_u\Big|\right]\le
\frac{1}{4}\,\E\bigg[\sup_{t\in[0,T]}\big(e^{\beta Q_t}y^2_t\big)\bigg]+144\,\E\bigg[\int_0^T e^{\beta Q_u}\,\|m_uz_u\|^2\,dQ_u\bigg] \\
&\le\frac{1}{4}\,\E\bigg[\sup_{t\in[0,T]}\big(e^{\beta Q_t}y^2_t\big)\bigg]+144\lambda(C_Q+1)\big(\|w\|_{\Sb}^2+\|v\|^2_{\Lb}\big).
\end{align*}
By combining  \eqref{eq4.8} with the inequality above, we deduce that
\begin{align*}
\E\Big[ \sup_{t\in[0,T]}\big(e^{\beta Q_t}y^2_t\big)\Big]&\le\lambda(C_Q+1)\big(\|w\|_{\Sb}^2+\|v\|^2_{\Lb}\big)
+2\,\E\bigg[\sup_{t\in[0,T]}\Big|\int_t^Te^{\beta Q_u}y_{u-}z^*_u\,dM_u\Big|\bigg]\\
&\le 289\lambda(C_Q+1)\big(\|w\|_{\Sb}^2+\|v\|^2_{\Lb}\big)+\frac{1}{2}\,\E\bigg[\sup_{t\in[0,T]}\big(e^{\beta Q_t}y^2_t\big)\bigg]
\end{align*}
and thus, in view of the finiteness of the norm $\|y\|_{\Sb}$ (see \eqref{eq4.4}), we obtain
\begin{align*}
\E\bigg[\sup_{t\in[0,T]}\big(e^{\beta Q_t}y_t^2\big)\bigg]\le 578\lambda(C_Q+1)\big(\|w\|_{\Sb}^2+\|v\|^2_{\Lb}\big),
\end{align*}
which shows that \eqref{eq4.7} is satisfied (recall that $\lambda=2 \beta^{-1}\wh{L}^2$). Since inequalities \eqref{eq4.6} and \eqref{eq4.7} are now confirmed, we conclude that the desired inequality \eqref{eq4.3} is valid as well and thus the proof of the theorem is now complete.
\end{proof}

\section{Solution to the Linear BSDE}   \label{sec5}

When examining the special case of the linear BSDE, we work under two additional  postulates, which will allow us to find an explicit representation
for the component $Y$ of a unique solution $(Y,Z)$. The first one concerns the structure of the predictable covariation process $\langle M\rangle$.

\bhyp \label{ass5.1}
The matrix $m_t$ in representation \eqref{eq2.1} of the predictable covariation process $\langle M\rangle$ is invertible for all $t \in [0,T]$.
\ehyp

Assumption \ref{ass5.1} allows us to define the auxiliary function $\wh{g}(t,y,z):=g(t,y,m^{-1}_tz)$. It is readily seen that if $g$ is an arbitrary
uniformly $m$-Lipschitz continuous generator, then the function $\wh{g}$ is uniformly Lipschitz continuous since we have
\begin{equation*}
|\wh{g}(t,y_1,z_1)-\wh{g}(t,y_2,z_2)|\le\wh{L}\big(|y_1-y_2|+\|z_1-z_2\|\big).
\end{equation*}


We also impose a specific structure on the $\dimn$-dimensional martingale $M$.

\bhyp \label{ass5.2}
{\rm We assume that $M=(M^1,\dots ,M^\dimn )^*=(M^c,M^d)^{\ast}$ where the process $M^c:=(M^1,\dots ,M^k)^*$ (respectively, $M^d=(M^{k+1},\dots ,M^{\dimn})^*$) is an $\brr^k$-valued, continuous martingale (respectively, an $\brr^{\dimn-k}$-valued, purely discontinuous martingale) for some $k\in\{0,1,\dots,\dimn\}$.}
\ehyp

\begin{remark} \label{rem5.2} {\rm
We stress that Assumption \ref{ass5.2} is not directly concerned with the dynamics of asset prices, but rather with modeling of underlying sources for various kinds of financial risks. Therefore, it is not restrictive when dealing with practical applications of BSDEs driven by an RCLL martingale to problems arising in financial mathematics (see, in particular, Dumitrescu et al. \cite{DQS2018} or Kim et al. \cite{KNR2018a,KNR2018b} for recent studies of American and game options in nonlinear markets).}
\end{remark}

Let us denote $\alphat_t:=m_t^{-1}$ so that $\alphat_t^{-1}=m_t=m_t^{\ast}$. Under Assumption \ref{ass5.2}, we have that $\langle M^i,M^j \rangle=0$ for all $i,j$ such that $i\le k$ and $j\ge k+1$. Therefore, we may and do assume that $m$ is a symmetric block matrix and thus the notation $\alphat^c_t=(m^c_t)^{-1}$ and $\alphat^d_t=(m^d_t)^{-1}$ where $m^c_t$ and $m^d_t$ are symmetric random matrices of dimensions $k$ and $\dimn -k$, respectively, can be easily justified. By the same token, for any $z=(z_1,\dots ,z_\dimn)^*\in\brr^n$ we write $z=(z^c,z^d)^*$ where $z^c=(z_1,\dots ,z_k)$ and $z^d=(z_{k+1},\dots ,z_\dimn)$. An analogous notational convention can be applied to the process $Z$, so that we may write $Z=(Z^c,Z^d)^*$ and to other stochastic processes of interest.

We now examine the linear BSDE
\begin{equation} \label{linBSDE}
\left\{ \begin{array} [c]{ll}
dY_t=-\big(a_tY_t+\bfun_t+\cfun^*_t m^{c}_tZ^c_t+\dfun^*_t m^{d}_tZ^d_t\big)\,dQ_t+Z_t^{\ast}\,dM_t+d\UU_t,\medskip\\
Y_{T}=\eta ,
\end{array} \right.
\end{equation}
and we work under the following variant of Assumption \ref{ass2.3}.

\bhyp \label{ass5.3}
The triplet $(g,\eta ,\UU)$ is such that: \hfill \break
(i) the generator $g$ equals $g(t,y,z)=a_ty+\bfun_t+\cfun^*_t m^{c}_tz^c+\dfun^*_tm^{d}_tz^d$ where the real-valued processes $a,\cfun$ and $\dfun$  are bounded and
$\bfun$ belongs to $\Hs$. \hfill \break
(ii) the random variable $\eta $ belongs to $\Ltg$, \hfill \break
(iii) $\UU$ is an \cadlags  process of finite variation and belongs to $\Hs$; furthermore,
$|\UU|_{T}\in \Ltg$ where $|\UU|=(|\UU|_t,\,t\in [0,T])$ is the total variation process of $\UU$. 
\ehyp

\begin{remark} \label{rem5.3}
{\rm Notice that the total variation of $\UU$ satisfies
\begin{align*}
|\UU|_T \geq \sum_{u\le t}|\Delta\UU_u|\geq\bigg(\sum_{u\le t}(\Delta\UU_u)^2\bigg)^{1/2}.
\end{align*}
Hence we deduce from part (iii) in Assumption \ref{ass5.3} that $\E \big(\sum_{u\le t}(\Delta \UU_u)^2\big)<\infty $.}
\end{remark}

Since the processes $a,c,d$ are assumed to be bounded, the generator $g(t,y,z)=a_ty+ \bfun_t+ \cfun^*_t m^{c}_tz^c+ \dfun^*_tm^{d}_tz^d$ is
uniformly $m$-Lipschitz continuous and the process $g(\cdot,0,0)=c$ belongs to $\Hs $. Therefore, we may apply Theorem  \ref{the4.1}
to ensure that the BSDE \eqref{linBSDE} has a unique solution $(Y,Z) \in \Ss$ provided that Assumptions \ref{ass2.1}-\ref{ass2.2} and \ref{ass5.1}-\ref{ass5.3} are satisfied. Our goal is to find an explicit representation for the process $Y$, which is convenient to establish the comparison property.

Before stating the next result, we need to introduce some notation. For an arbitrary $\brr^{\dimn-k}$-valued process $\dfun$, which is assumed to be bounded and $\bff$-predictable, we define the \cadlag, $\bff$-adapted process $\qqzet$ by
\begin{equation} \label{eq5.2}
\qqzet_t:=\cE_t(\wh{Q})\cE_t(\wh{M})=\cE_t(\wh{Q}+\wh{M})
\end{equation}
where $\wh{Q}$ is a continuous, bounded process of finite variation given by $\wh{Q}_t:=\int_0^t a_u\,dQ_u$ and $\wh{M}$
is an $\bff$-martingale defined by
\begin{equation} \label{eq5.3}
\wh{M}_t:=\int_0^t\cfun^*_u\alphat^c_u\,dM^c_u+\int_0^t\dfun^{\ast}_u\alphat^d_u\,dM^d_u=\wh{M}^c_t+\wh{M}^d_t.
\end{equation}
Here $\cE(\wh{M})$ denotes the Dol\'eans exponential given by the following expression
\begin{align*}
\cE_t(\wh{M})=\exp\Big(\wh{M}_t-\frac{1}{2}\langle\wh{M}^c\rangle_t\Big)\prod_{0<u\le t}\big(1+\Delta\wh{M}_u\big)e^{-\Delta\wh{M}_u}.
\end{align*}
For brevity, we denote
\begin{equation} \label{eq5.4}
\qA_t:=\int_0^t\qqzet_{u-}\,d\UU_u+\langle \qqzet,\UU\rangle_t .
\end{equation}

We are ready to derive an explicit representation for the process $Y$ in the unique solution to the linear BSDE.

\begin{proposition} \label{pro5.1}
If Assumptions \ref{ass2.1}-\ref{ass2.2} and \ref{ass5.1}-\ref{ass5.3} hold,
then the linear BSDE \eqref{linBSDE} has a unique solution $(Y,Z)\in \Ss $ and the process $Y$ satisfies
\begin{equation} \label{eq5.10}
\qqzet_t Y_t=\E\bigg[\qqzet_{T}\eta+\int_t^T\qqzet_u\bfun_u\,dQ_u-\big(\qA_T-\qA_t\big)\,\Big|\,\cG_t\bigg]
\end{equation}
where $\qqzet$ and $\qA$ are given by \eqref{eq5.2} and \eqref{eq5.4}, respectively.
\end{proposition}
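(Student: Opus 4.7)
\textbf{Proof plan for Proposition \ref{pro5.1}.}

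The plan is to apply It\^o's product formula to the process $\qqzet_t Y_t$, use the defining SDE $dq_t = q_{t-}d\wh{Q}_t + q_{t-}d\wh{M}_t$ for the Dol\'eans exponential, and observe a cancellation that leaves only the terms announced in \eqref{eq5.10}. Existence and uniqueness of $(Y,Z)\in\SSs$ already follows from Theorem \ref{the4.1} applied to the uniformly $m$-Lipschitz generator $g(t,y,z)=a_ty+b_t+c^{*}_tm^c_tz^c+d^{*}_tm^d_tz^d$, so the whole task is to derive the representation of $Y$.

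First I would write $d(\qqzet Y) = \qqzet_{-}\,dY + Y_{-}\,d\qqzet + d[\qqzet,Y]$ and substitute \eqref{linBSDE} and the SDE for $\qqzet$. Since $Q$ and $\wh{Q}$ are continuous, $Y\,dQ = Y_{-}\,dQ$ and the finite variation continuous part of $\qqzet$ does not enter any bracket, so $[\qqzet,Y] = \int\qqzet_{-}\,d[\wh{M},Y]$. Using Assumption \ref{ass5.2} (so that the continuous and purely discontinuous parts of $M$ are orthogonal) together with the identity $\langle\wh{M},\int Z^{*}\,dM\rangle_t = \int_0^t(c^{*}_um^c_uZ^c_u + d^{*}_um^d_uZ^d_u)\,dQ_u$, which follows from $\alphat^c_t(m^c_t)^2 = m^c_t$ and the analogous relation on the discontinuous block, one checks that the $dQ$-drift contributions from all three terms combine to $(-\qqzet_{-}b_t + \qqzet_{-}a_tY_t - \qqzet_{-}a_tY_t)\,dQ_t = -\qqzet_{-}b_t\,dQ_t$, while the terms in $Z$ cancel identically. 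The remaining martingale contributions are $\qqzet_{-}Z^{*}_t\,dM_t + Y_{-}\,d(\int\qqzet_{-}\,d\wh{M})_t$ together with the purely discontinuous martingale $[\wh{M}^d,\int(Z^d)^{*}dM^d] - \langle\wh{M}^d,\int(Z^d)^{*}dM^d\rangle$.

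Next I would handle the $D$-terms. From $\qqzet_{-}\,dD + \qqzet_{-}\,d[\wh{M},D]$, using again that the continuous finite-variation part of $\qqzet$ does not contribute to brackets, we have $[\qqzet,D] = \int\qqzet_{-}\,d[\wh{M},D]$, and since $\langle\qqzet,D\rangle$ is the predictable compensator of $[\qqzet,D]$ by definition of $\qA$ in \eqref{eq5.4},
\begin{equation*}
\qqzet_{-}\,dD_t + \qqzet_{-}\,d[\wh{M},D]_t = d\qA_t + d\bigl([\qqzet,D]_t - \langle\qqzet,D\rangle_t\bigr),
\end{equation*}
where the last differential is that of a local martingale. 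Combining everything yields
\begin{equation*}
d(\qqzet_t Y_t) = -\qqzet_{t-}b_t\,dQ_t + d\qA_t + dN_t
\end{equation*}
for a local martingale $N$ with $N_0=0$.

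Finally I would integrate from $t$ to $T$, use $Y_T=\eta$ and the continuity of $Q$ to replace $\qqzet_{-}b\,dQ$ by $\qqzet b\,dQ$, and take conditional expectations to conclude \eqref{eq5.10}. The main obstacle is justifying that $N$ is a genuine martingale, not merely a local one: this relies on $Z\in\Ls$, on $\qqzet$ being square-integrable (since $a,c,d$ are bounded and $Q$ is bounded, so $\wh{M}$ is a BMO martingale and $\sup_{t}\qqzet_t\in L^p$ for every $p$), on $Y\in\Ss$, and on the integrability of $|D|_T$ from Assumption \ref{ass5.3}(iii); a standard localization by stopping times followed by dominated convergence (using Remark \ref{rem5.3} to control the jump contributions to $[\qqzet,D]$) will upgrade $N$ to a uniformly integrable martingale, allowing the conditional expectation of $N_T-N_t$ to vanish and producing \eqref{eq5.10}.
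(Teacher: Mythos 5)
Your overall route is the same as the paper's: apply the It\^o product rule to $\qqzet_t Y_t$, use $d\qqzet_t=\qqzet_{t-}\,d\wh{Q}_t+\qqzet_{t-}\,d\wh{M}_t$, observe that the $Z$-terms cancel against the bracket $[\qqzet,Y]$ (through its compensator) and that the drift reduces to $-\qqzet_t\bfun_t\,dQ_t+d\qA_t$ plus a local martingale, then take conditional expectations; existence and uniqueness are indeed just Theorem \ref{the4.1}. The algebraic part of your computation, including the treatment of the $\UU$-terms via $[\qqzet,\UU]=\int\qqzet_{-}\,d[\wh{M},\UU]$ and $\qA_t=\int_0^t\qqzet_{u-}\,d\UU_u+\langle\qqzet,\UU\rangle_t$, matches the paper's proof.

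The genuine gap is in the final step, where you upgrade the local martingale $N$ to a true martingale. Your justification rests on the claim that, since $a,\cfun,\dfun$ and $Q$ are bounded, $\wh{M}$ is BMO and hence $\sup_t\qqzet_t\in L^p$ for every $p$. This is false in the present jump setting: boundedness of $\langle\wh{M}\rangle_T$ does not control the size of the jumps of $\wh{M}^d$, and $\cE(\wh{M})$ contains the factors $(1+\Delta\wh{M}_u)e^{-\Delta\wh{M}_u}$; a single compensated jump whose size has finite second but infinite higher moments (with bounded compensator, so $\langle\wh{M}\rangle_T$ stays bounded) already destroys all moments of order $p>2$. What is true, and what the argument actually needs, is only $\E\big[\sup_{t\in[0,T]}\qqzet_t^2\big]<\infty$, but this requires a proof: the paper (Lemma \ref{lem5.1}) obtains it by writing $\cE^2_t(\wh{M})=\gamma_t\exp\big(\langle\wh{M}^c\rangle_t+\langle\wh{M}^d\rangle_t\big)$ with $\gamma$ a nonnegative local martingale, using optional sampling and the bound $\langle\wh{M}\rangle_T\le C$ to get uniform integrability, and then Doob's inequality. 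Moreover, your "standard localization plus dominated convergence" still needs an integrable dominating bound for all the pieces of $N$ (the stochastic integrals $\int\qqzet_{-}Z^*\,dM$ and $\int\qqzet_{-}Y_{-}\,d\wh{M}$, the compensated bracket $[\qqzet,\UU]-\langle\qqzet,\UU\rangle$, and also the term $\int\qqzet_u\bfun_u\,dQ_u$); producing such a dominant is exactly the content of the paper's estimate $\E[\sup_t|\wt{M}_t|]<\infty$, which uses the $L^2$-bound on $\sup_t\qqzet_t$, the Kunita--Watanabe inequality for $\langle\qqzet,\UU\rangle$, the square-integrability of the total variation $|\UU|_T$ (Remark \ref{rem5.3}), the fact that $Y-\UU\in\Ss$, and a Cauchy--Schwarz bound exploiting the boundedness of $Q$. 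As written, your proposal asserts rather than proves this integrability, and the one quantitative claim you do make (all moments via BMO) is incorrect; filling in this lemma is the essential missing work.
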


\proof
Since $\wh{Q}$ is an $\bff$-adapted, bounded, continuous process of finite variation and $\wh{M}$ is an $\bff$-martingale given by \eqref{eq5.3},
it is obvious that $[\wh{Q},\wh{M}]=0$. Recall that $\cE(\wh{M})$ is given by
\begin{align*}
\cE_t(\wh{M})=\exp\Big(\wh{M}_t-\frac{1}{2}\langle\wh{M}^c\rangle_t\Big)\prod_{0<u\le t}\big(1+\Delta\wh{M}_u\big)e^{-\Delta\wh{M}_u}
\end{align*}
where $\wh{M}^c_t=\int_0^t\cfun^*_u \alphat^c_u\,dM^c_u$ and $\Delta\wh{M}_u:=\wh{M}_u-\wh{M}_{u-}=\wh{M}^d_u-\wh{M}^d_{u-}=\dfun^*_u\alphat^d_u\Delta M^d_u$.
Simple computations show that $\qqzet$ can be represented as follows
\begin{align*}
\qqzet_t=\exp\left(\int_0^t a_u\,dQ_u\right)\exp\left(\wh{M}^c_t-\frac{1}{2}\int_0^t\|\cfun_u\|^2\,dQ_u\right)
\exp\big(\wh{M}^d_t\big)\prod_{0< u\le t}\big(1+\Delta \wh{M}^d_u\big)e^{-\Delta \wh{M}^d_u}.
\end{align*}
In view of \eqref{eq5.2} and \eqref{eq5.3}, we have that
\begin{align*}
d\qqzet_t=\qqzet_{t-}\,d(\wh{Q}_t+\wh{M}_t)=\qqzet_{t-}\,d\wh{Q}_t+ \qqzet_{t-}\,d\wh{M}_t
=\qqzet_{t-} \big(a_t\,dQ_t+\cfun^*_t\alphat^c_t\,dM^c_t+\dfun^{\ast}_t\alphat^d_t\,dM^d_t\big)
\end{align*}
and thus, using \eqref{linBSDE}, we obtain
$$
d\langle\qqzet,Y\rangle_t=\qqzet_t\big(\cfun^*_t m^{c}_tZ^c_t+\dfun^{\ast}_t m^{d}_tZ^d_t\big)\,dQ_t+\langle q,\UU \rangle_t
$$
where we denote $Z=(Z^c,Z^d)^*$. Since $\UU$ is an \cadlags  process of finite variation, and we have
\[
[\qqzet,\UU]_t=\sum_{u\le t}\Delta\qqzet_u\Delta\UU_u
\]
where $\Delta\qqzet_t=\qqzet_t-\qqzet_{t-}$ and $\Delta\UU_t=\UU_t-\UU_{t-}$.  Using the It\^{o} integration by parts formula, we obtain
\begin{align*}
d(\qqzet_tY_t)&=\qqzet_{t-}\,dY_t+Y_{t-}\,d\qqzet_t+d[q,Y]_t \\
&=-\qqzet_t\big(a_tY_t+\bfun_t+\cfun^*_tm^{c}_tZ^c_t+\dfun^*_tm^d_tZ^d_t\big)\,dQ_t+\qqzet_{t-}Z_t^{\ast}\,dM_t+\qqzet_{t-}\,d\UU_t\\
&\quad\mbox{}+\qqzet_tY_ta_t\,dQ_t+\qqzet_{t-}Y_{t-}\big(\cfun^*_t\alphat^c_t\,dM^c_t+\dfun^{\ast}_t\alphat^d_t\,dM^d_t\big)
+d\langle q,Y \rangle_t+d[q,Y]_t-d\langle q,Y\rangle_t \\
&=d\wt{M}_t-\qqzet_t\big(\bfun_t+\cfun^*_tm^{c}_tZ^c_t+ \dfun^*_tm^d_tZ^d_t\big)\,dQ_t
+ \qqzet_t\big(\cfun^*_t m^{c}_tZ^c_t+\dfun^{\ast}_t m^{d}_t Z^d_t\big)\,dQ_t+\qqzet_{t-}\,d\UU_t+d\langle q,\UU \rangle_t \\
&=d\wt{M}_t-\qqzet_t\bfun_t\,dQ_t+d\qA_t
\end{align*}
where the \cadlags  process $\wt{M}$ is defined by
\begin{equation*}
d\wt{M}_t=\qqzet_{t-}\big(Z_t^{\ast}\,dM_t+Y_{t-}\cfun^*_t\alphat^c_t\,dM^c_t
+Y_{t-}\dfun^{\ast}_t\alphat^d_t \,dM^d_t \big)+d[\qqzet,Y]_t-d\langle \qqzet,Y\rangle_t
\end{equation*}
where $\wt{M}_0:=\qqzet_0Y_0=Y_0$ and the process $N:=[\qqzet,Y]-\langle\qqzet,Y\rangle$ is an $\bff$-local martingale with $N_0=0$. We also have that
\begin{align*}
\wt{M}_T-\wt{M}_t=\qqzet_{T}\eta-\qqzet_tY_t+\int_t^T\qqzet_ub_u\,dQ_u-(\qA_T-\qA_t)
\end{align*}
and thus, in view of Lemma \ref{lem5.1}, the process $\wt{M}$ is an $\bff$-martingale. Hence, after taking the conditional expectation with respect
to $\cG_t$, we obtain \eqref{eq5.10}.
\endproof

The following lemma is required for the proof of Proposition \ref{pro5.1} and Theorem \ref{the6.1}.

\begin{lemma} \label{lem5.1}
If Assumptions \ref{ass2.1}-\ref{ass2.2} and \ref{ass5.1}-\ref{ass5.3} are satisfied, then the process $\wt{M}$ given by
\begin{equation} \label{eq5.5}
\wt{M}_t=\qqzet_tY_t-\qA_t+\int_0^t\qqzet_u\bfun_u\,dQ_u,
\end{equation}
is an $\bff$-martingale.
\end{lemma}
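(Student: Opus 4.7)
The plan is to prove the lemma in two steps: first, establish that $\wt{M}$ is a local $\bff$-martingale via an It\^o integration-by-parts argument; second, upgrade to a true $\bff$-martingale via an integrability estimate of class-$(D)$ type. The local martingale step is essentially the same It\^o calculation that appears in the proof of Proposition \ref{pro5.1} (note that this part of the calculation does not invoke the lemma itself, only the SDEs for $Y$ and $\qqzet$); the integrability step is where the specific structure of the linear coefficients $a,\cfun,\dfun$ must be exploited.

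For the first step, I would apply the It\^o integration-by-parts formula to the product $\qqzet_tY_t$, using the SDE \eqref{linBSDE} for $Y$ together with the identity $d\qqzet_t=\qqzet_{t-}(a_t\,dQ_t+\cfun^*_t\alphat^c_t\,dM^c_t+\dfun^*_t\alphat^d_t\,dM^d_t)$. After cancellation of the finite-variation terms involving $a_tY_t$ and $\cfun^*_tm^c_tZ^c_t+\dfun^*_tm^d_tZ^d_t$, and separating $d[\qqzet,Y]_t=d\langle\qqzet,Y\rangle_t+(d[\qqzet,Y]_t-d\langle\qqzet,Y\rangle_t)$, one obtains
\begin{align*}
d\wt{M}_t=\qqzet_{t-}Z_t^*\,dM_t+\qqzet_{t-}Y_{t-}\bigl(\cfun^*_t\alphat^c_t\,dM^c_t+\dfun^*_t\alphat^d_t\,dM^d_t\bigr)+d\bigl([\qqzet,Y]_t-\langle\qqzet,Y\rangle_t\bigr),
\end{align*}
which exhibits $\wt{M}-Y_0$ as a sum of stochastic integrals against the $\bff$-martingales $M,M^c,M^d$ plus the compensated covariation $[\qqzet,Y]-\langle\qqzet,Y\rangle$; hence $\wt{M}$ is an $\bff$-local martingale.

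For the second step, I would verify $\E\bigl[\sup_{t\in[0,T]}|\wt{M}_t|\bigr]<\infty$, since any local martingale whose running supremum is integrable is a uniformly integrable martingale. From representation \eqref{eq5.5} one bounds
\begin{align*}
\sup_{t\in[0,T]}|\wt{M}_t|\le\sup_{t\in[0,T]}|\qqzet_tY_t|+\sup_{t\in[0,T]}|\qA_t|+\int_0^T|\qqzet_u\bfun_u|\,dQ_u,
\end{align*}
and each piece is controlled by Cauchy--Schwarz using: $Y-\UU\in\Ss$ together with $\UU\in\Hs$ and $|\UU|_T\in\Ltg$ for the factor involving $Y$; the bound $|\qA_t|\le\sup_t|\qqzet_{t-}|\,|\UU|_T+|\langle\qqzet,\UU\rangle|_T$ for the term $\qA$; and $\bfun\in\Hs$ combined with the boundedness $Q_t\le C_Q$ for the last integral.

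The main obstacle is thus establishing strong enough moment bounds on the Dol\'eans exponential $\qqzet=\cE(\wh{Q}+\wh{M})$, specifically $\E\bigl[\sup_{t\in[0,T]}\qqzet_t^{p}\bigr]<\infty$ for some $p>2$, so that the Cauchy--Schwarz estimates close. For the continuous factor this follows from Novikov's criterion since $\wh{Q}$ is bounded and $\langle\wh{M}^c\rangle_T=\int_0^T\|\cfun_u\|^2\,dQ_u$ is uniformly bounded (using boundedness of $\cfun$ and of $Q$). The discontinuous factor $\cE(\wh{M}^d)$ is more delicate: boundedness of $\dfun$ and of $\langle\wh{M}^d\rangle_T$, combined with a standard $L^p$-estimate for purely discontinuous exponential martingales (or a direct localization argument followed by monotone/dominated convergence along a sequence of stopping times on which $\qqzet$ is bounded), should yield the required moment bound and complete the proof.
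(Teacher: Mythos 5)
Your overall skeleton (local martingale via the It\^o product computation, then $\E\big[\sup_{t\in[0,T]}|\wt{M}_t|\big]<\infty$ to upgrade to a uniformly integrable martingale, with the individual terms of $\wt{M}$ handled by Cauchy--Schwarz using $Y-\UU\in\Ss$, $|\UU|_T\in\Ltg$, $\bfun\in\Hs$ and the boundedness of $Q$) is exactly the paper's strategy. The gap is at the one place where real work is required: the moment bound on $\sup_{t\in[0,T]}\qqzet_t$. First, your calibration is off: none of your Cauchy--Schwarz estimates needs $p>2$; every companion factor ($\sup_t|Y_t-\UU_t|$, $|\UU|_T$, $\langle\UU\rangle_T^{1/2}$, $(\int_0^T\bfun_u^2\,dQ_u)^{1/2}$) is in $L^2$, so $\E\big[\sup_t\qqzet_t^2\big]<\infty$ suffices. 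Second, and more seriously, the bound you ask for cannot be obtained the way you suggest, and for $p>2$ it is false in general under Assumption \ref{ass5.3}: the assumptions only give boundedness of $\langle\wh{M}^d\rangle_T=\int_0^T\|\dfun_u\|^2\,dQ_u$, while the jumps $\Delta\wh{M}^d_u=\dfun_u^{\ast}\alphat^d_u\Delta M^d_u$ are in general unbounded, merely square-integrable, and need not exceed $-1$ (so $\cE(\wh{M})$ may change sign, which also undercuts Novikov-type reasoning applied to the product). A purely discontinuous exponential whose jump sizes have only two moments has no $L^p$ bound for $p>2$, so there is no ``standard $L^p$-estimate'' to cite; and a localization along stopping times where $\qqzet$ is bounded does not by itself produce any quantitative estimate that survives the limit.

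The paper closes precisely this gap with a specific argument you would need to supply: writing $\qqzet=\cE(\wh{Q})\cE(\wh{M})$ with $\cE(\wh{Q})$ bounded, one computes $d\cE^2_t(\wh{M})$ by It\^o and factors $\cE^2_t(\wh{M})=\gamma_t\exp\big(\langle\wh{M}^c\rangle_t+\langle\wh{M}^d\rangle_t\big)$, where $\gamma$ is a nonnegative local martingale (hence a supermartingale with $\E[\gamma_t]\le 1$); since the exponent is bounded by \eqref{eq5.7}, optional sampling gives $\E\big[\cE^2_{\tau}(\wh{M})\big]\le C$ uniformly over stopping times $\tau$, whence $\cE(\wh{M})$ is a true martingale and Doob's inequality yields $\E\big[\sup_t\cE^2_t(\wh{M})\big]\le 4C$, i.e.\ \eqref{eq5.8}--\eqref{eq5.9}. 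With that second-moment bound in hand, your term-by-term estimates (including the Kunita--Watanabe bound for $\langle\qqzet,\UU\rangle$ and $\E[\langle\UU\rangle_T]<\infty$ from Remark \ref{rem5.3}) do close, exactly as in the paper. As written, however, your proposal asserts the crucial estimate rather than proving it, and in the strengthened $p>2$ form it is not a consequence of the stated assumptions.
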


\proof
We claim that
\begin{equation} \label{eq5.6}
\E \bigg[\sup_{t\in[0,T]}|\wt{M}_t|\bigg]<\infty,
\end{equation}
which is known to imply that $\wt{M}$ is a uniformly integrable $\bff$-martingale. Our goal is thus to establish \eqref{eq5.6}. For this purpose,
we start by examining the process $\qqzet$. Since $\cfun$ and $\dfun$ are bounded, we have, for some constant $C$,
\begin{equation} \label{eq5.7}
\langle \wh{M}\rangle_T=\int_0^T\big(|\cfun_t|^2+|\dfun_t|^2\big)\,dQ_t\le C<\infty.
\end{equation}
We will deduce from \eqref{eq5.2} that
\begin{equation} \label{eq5.8}
\|\qqzet\|^2_{\Ss}=\E\bigg[\sup_{t\in[0,T]}\qqzet_t^2\bigg]<\infty.
\end{equation}
To this end, we first observe that \eqref{eq5.2} gives $\qqzet_t=\cE_t (\wh{Q})\cE_t (\wh{M})$ where the process $\cE(\wh{Q})$ is bounded since $a$
and $Q$ are bounded. To establish \eqref{eq5.8}, it thus suffices to show that
\begin{equation} \label{eq5.9}
\|\cE(\wh{M})\|^2_{\Ss}=\E\bigg[\sup_{t\in[0,T]}\cE_t^2(\wh{M})\bigg]<\infty.
\end{equation}
Observe that
\begin{align*}
d\cE_t(\wh{M})=\cE_{t-}(\wh{M})\,d\wh{M}_t=\cE_{t-}(\wh{M})\,d\wh{M}^c_t+\cE_{t-}(\wh{M})\,d\wh{M}^d_t
\end{align*}
and thus
\begin{align*}
d[\cE(\wh{M})]_t&=\cE_{t-}^2(\wh{M})\,d\langle\wh{M}^c\rangle_t+\cE_{t-}^2(\wh{M})\,d[\wh{M}^d]_t\\
&=\cE_{t-}^2(\wh{M})\big(d\langle\wh{M}^c\rangle_t+d\langle\wh{M}^d\rangle_t\big)
+\cE_{t-}^2(\wh{M})\big(d[\wh{M}^d]_t-d\langle\wh{M}^d\rangle_t\big),
\end{align*}
which in turn yields
\begin{align*}
d\cE_t^2(\wh{M})&=2\cE_{t-}(\wh{M})\,d\cE_t(\wh{M})+d[\cE(\wh{M})]_t\\
&=\cE_{t-}^2(\wh{M})\big(d\langle\wh{M}^c\rangle_t+d\langle\wh{M}^d\rangle_t\big)
+\cE_{t-}^2(\wh{M})\big(2\,d\wh{M}_t+d[\wh{M}^d]_t-d\langle\wh{M}^d\rangle_t\big).
\end{align*}
Recalling that $\cE_{0}(\wh{M})=1$, we obtain $\cE_t^2 (\wh{M})=\gamma_t\exp\big(\langle\wh{M}^c\rangle_t+\langle\wh{M}^d\rangle_t\big)$
where $\gamma_0=1$ and
\begin{align*}
d\gamma_t=\gamma_{t-}\Big(2\,d\wh{M}_t+d[\wh{M}^d]_t-d\langle \wh{M}^d\rangle_t\Big).
\end{align*}
From the last two equations, it follows that $\gamma$ is a non-negative local martingale and hence a supermartingale,
so that $\E[\gamma_t]\le\E[\gamma_0]=1$ for all $t\in[0,T]$. It is clear from \eqref{eq5.7} that there exists a constant $C$ such that, for all $t \in [0,T]$
\begin{align*}
\exp\big(\langle\wh{M}^c\rangle_t+\langle\wh{M}^d\rangle_t\big)\le C.
\end{align*}
Consequently, it follows that $\cE_t^2 (\wh{M})\le C\gamma_t$  for all $t \in [0,T]$. Since $\cE^2 (\wh{M})$ and $\gamma$ are \cadlags processes,  we deduce that
$\cE_{\tau}^2 (\wh{M})\leq C\gamma_{\tau}$ for all $\tau\in\mathcal{T}$ where $\mathcal{T}$ is the set of  all stopping times taking values in $[0,T]$.
By applying the optional stopping theorem to $\gamma$, we get
\begin{align*}
\E\Big[\cE_{\tau}^2 (\wh{M})\Big]\leq C\, \E\big[\gamma_{\tau}\big]\leq C, \text{ for all } \tau\in\mathcal{T}.
\end{align*}
Hence the family $\{\cE_{\tau}(\wh{M})\}_{\tau\in\mathcal{T}}$ is uniformly integrable, which implies that $\cE(\wh{M})$
is a nonnegative \cadlags  martingale. From Doob's inequality, we obtain
\begin{align*}
\E\bigg[\sup_{t\in[0,T]}\cE_{t}^2 (\wh{M})\bigg]\leq 4\, \E\big[\mathcal{E}_{T}^2(\wh{M})\big]\leq 4C,
\end{align*}
which shows that \eqref{eq5.9} is valid and thus \eqref{eq5.8} holds as well.

We are now ready to analyze the right-hand side of \eqref{eq5.5}. It is clear that
\begin{align*}
\big|\qqzet_tY_t-\qA_t\big|\le |\qqzet_t||Y_t-\UU_t|+|\qqzet_t||\UU_t|+(|\UU_0|+|\UU_T|)\sup_{t\in [0,T]}\qqzet_t+\sup_{t\in [0,T]}|\langle\qqzet,\UU\rangle_t|.
\end{align*}
Since $\UU$ is assumed to be a process of finite variation with the total variation process  $|\UU|$ such that $\E(|\UU|^2_T)<\infty$, we have $\sup_{\,t\in [0,T]} \UU^2_t\le 2(\UU^2_0+|\UU|^2_T)<\infty$ and thus $\UU$ belongs to $\Ss$. Of course, the process $Y-\UU$ is in $\Ss$ as well (see Theorem \ref{the4.1}). Using the Kunita-Watanabe inequality (see, for instance, Theorem 8.3 in He et al. \cite{HWY1992}), we obtain, for all $t\in [0,T]$,
\begin{align*}
|\langle\qqzet,\UU\rangle_t|\le(\langle\qqzet\rangle_t\langle\UU\rangle_t)^{1/2}\le 2(\langle\qqzet\rangle_t+\langle\UU\rangle_t)
\le 2(\langle\qqzet\rangle_T+\langle\UU\rangle_T)
\end{align*}
and thus
\begin{align*}
\E \bigg[\sup_{t\in [0,T]}|\langle\qqzet,\UU\rangle_t|\bigg]\le 2\big(\E(\langle\qqzet\rangle_T)+\E(\langle\UU\rangle_T)\big).
\end{align*}
Since $\UU$ is a process of finite variation, we have $[\UU ]_t=\sum_{u\le t}(\Delta \UU_u)^2$ and, from part (iii) in Assumption \ref{ass5.3},
we infer that $\E ([\UU]_T)<\infty$ (see Remark \ref{rem5.3}), which in turn implies that $\E (\langle\UU\rangle_T)<\infty$. Similarly, since
\begin{align*}
d\qqzet_t=d(\cE_t(\wh{Q})\cE_t(\wh{M}))=\cE_t(\wh{Q})\,d\cE_t(\wh{M})+\cE_{t-}(\wh{M})\,d\cE_t(\wh{Q})
\end{align*}
and $\langle\cE(\wh{Q})\rangle=\langle\cE(\wh{Q}),\cE(\wh{M})\rangle=0$, using the boundedness of the processes $\cE(\wh{Q})$ and $\langle\wh{M}\rangle$ (see \eqref{eq5.7}),
we obtain
\begin{align*}
\E (\langle\qqzet\rangle_T)=\E \bigg[\int_0^T\cE^2_t(\wh{Q})\cE^2_{t-}(\wh{M})\,d\langle\wh{M}\rangle_t\bigg]\le C\|\cE(\wh{M})\|^2_{\Ss}.
\end{align*}
We conclude that
\begin{align*}
\E\bigg[\sup_{t\in [0,T]}\big|\qqzet_tY_t-\qA_t \big|\bigg]\le C\big[\|\qqzet\|_{\Ss}(\|y-\UU\|_{\Ss}+\|\UU\|_{\Ss})
+\E(\langle\UU\rangle_T)+\|\cE(\wh{M})\|^2_{\Ss}\big]<\infty
\end{align*}
where we have also used \eqref{eq5.8} and \eqref{eq5.9}.

Now let us study the integral $\int_0^t \qqzet_u\bfun_u\,dQ_u$. Using the boundedness of $Q$, we obtain the existence of constant $C$ which may very line by line such that
\begin{align*}
\E\bigg[\sup_{t\in[0,T]}\Big|\int_0^t\qqzet_u\bfun_u\,dQ_u\Big|\bigg]
&\le\E\bigg[\sup_{t\in[0,T]}\bigg(\int_0^t\qqzet^2_u\bfun^2_u\,dQ_u\bigg)^{1/2}Q_{T}^{1/2}\bigg]
\le C\,\E\bigg[ \bigg(\int_0^T\qqzet^2_u\bfun^2_u\,dQ_u\bigg)^{1/2}\bigg]\\
&\le C\,\E\bigg[\sup_{t\in[0,T]}\qqzet_t\bigg(\int_0^T\bfun^2_u\,dQ_u\bigg)^{1/2}\bigg]\le 2C(\|\qqzet\|_{\Ss}+\|\bfun\|_{\Hs})<\infty
\end{align*}
where we have also used \eqref{eq5.8} in the last inequality. We have thus shown that \eqref{eq5.6} holds.
\endproof

Let us examine some consequences of Proposition \ref{pro5.1}.
If we assume, in addition, that $\Delta\wh{M}_t=\Delta\wh{M}^d_t>-1$ for all $t\in [0,T]$, then the process $\qqzet$ given by \eqref{eq5.2} is strictly positive
and thus \eqref{eq5.10} yields the following representation for $Y$
\begin{align*}
Y_t=\E\bigg[\Gamma_{t,T}\eta+\int_t^T\Gamma_{t,u}\bfun_u\,dQ_u-\int_t^T\Gamma_{t,u-}\,d\UU_u- \qqzet_t^{-1}\big(\langle\qqzet,\UU\rangle_T-\langle
\qqzet,\UU\rangle_t\big)\,\Big|\,\cG_t\bigg]
\end{align*}
where we denote $\Gamma_{t,u}=\qqzet_t^{-1}\qqzet_u$ for all $0\le t\le u\le T$.

In particular, if the martingale $M$, and thus also the process $\qqzet$, are continuous, we obtain the following corollary, which furnishes a minor extension of the classical representation of a solution to a linear BSDE driven by the Brownian motion (see, for instance,
Proposition 2.2 in El Karoui et al. \cite{EPQ1997}). Since $M$ is continuous, it is clear that $m=m^c$ (hence $\alphat=\alphat^c$) and thus there is no need to introduce the processes $m^d,\alphat^d$ and $d$ in \eqref{linBSDE} and  \eqref{eq5.3}.  Therefore, these equations reduce to
\begin{equation} \label{eq5.11}
\left\{ \begin{array} [c]{ll}
dY_t=-\big(a_tY_t+\bfun_t+\cfun^*_t m_tZ_t \big)\,dQ_t+Z_t^{\ast}\,dM_t+d\UU_t,\medskip\\
Y_{T}=\eta ,
\end{array} \right.
\end{equation}
and
\begin{equation} \label{eq5.12}
\wh{M}_t:=\int_0^t\cfun^*_u\alphat_u\,dM_u .
\end{equation}

\begin{corollary} \label{cor6.1}
Let Assumptions \ref{ass2.1}-\ref{ass2.2} and \ref{ass5.1}-\ref{ass5.3} hold so that the linear BSDE \eqref{eq5.11} has a unique solution
$(Y,Z)\in\Ss$. If the $\bff$-martingale $M$ is continuous, then $Y$ equals
\begin{align*}
Y_t=\E\bigg[\Gamma_{t,T}\eta+\int_t^T\Gamma_{t,u}\bfun_u\,dQ_u-\int_t^T\Gamma_{t,u}\,d\UU_u\,\Big|\,\cG_t\bigg]
\end{align*}
where $\Gamma_{t,u}=\qqzet_t^{-1}\qqzet_u$ with $\qqzet_t=\cE_t(\wh{Q})\cE_t(\wh{M})$ where $\wh{Q}_t=\int_0^t a_u\,dQ_u$ and $\wh{M}_t=\int_0^t\cfun^*_u\alphat_u\,dM_u$.
\end{corollary}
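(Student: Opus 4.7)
The plan is to obtain Corollary \ref{cor6.1} as a direct specialization of Proposition \ref{pro5.1} to the case when the driving martingale $M$ is continuous. The first step is to observe that in this situation $\wh{M}$ defined by \eqref{eq5.12} is itself a continuous martingale, so the process $\qqzet=\cE(\wh{Q})\cE(\wh{M})$ is a product of two strictly positive continuous processes: the first factor is simply $\exp(\wh{Q}_t)$, and the second is the usual Dol\'eans exponential $\exp(\wh{M}_t-\tfrac{1}{2}\langle\wh{M}\rangle_t)$ of a continuous martingale. In particular $\qqzet_t>0$ pointwise and is $\cG_t$-measurable, which will permit dividing by $\qqzet_t$ in \eqref{eq5.10}.

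The key step, and the only nonroutine one, is to identify $\qA_t$ with $\int_0^t\qqzet_u\,d\UU_u$, i.e., to argue that $\langle \qqzet,\UU\rangle\equiv 0$ under the present continuity assumption. For this I would use that $\qqzet$ is a continuous semimartingale while $\UU$ is of finite variation: the quadratic covariation splits as $[\qqzet,\UU]=\langle\qqzet^c,\UU^c\rangle+\sum \Delta\qqzet\,\Delta\UU$, where the first bracket vanishes because a finite-variation process has no continuous martingale part ($\UU^c=0$), and the sum of jump products vanishes by continuity of $\qqzet$. Hence $[\qqzet,\UU]=0$ pointwise, and its predictable compensator $\langle\qqzet,\UU\rangle$ is identically zero as well. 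Continuity of $\qqzet$ also gives $\qqzet_{u-}=\qqzet_u$, so the definition \eqref{eq5.4} reduces to $\qA_t=\int_0^t\qqzet_u\,d\UU_u$.

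Substituting this simplified form of $\qA$ back into \eqref{eq5.10} and dividing through by the strictly positive, $\cG_t$-measurable random variable $\qqzet_t$ yields the announced representation with $\Gamma_{t,u}=\qqzet_t^{-1}\qqzet_u$. The anticipated difficulty lies entirely in justifying $\langle\qqzet,\UU\rangle=0$ cleanly; a minor subtlety is that one should really argue at the level of $[\qqzet,\UU]$ in order to avoid any integrability issue in defining the sharp bracket, and then invoke the general identity between the two brackets once $[\qqzet,\UU]$ has been shown to vanish. Apart from that point, the argument is a bookkeeping specialization of the more general formula obtained in Proposition \ref{pro5.1}.
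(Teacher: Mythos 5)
Your proposal is correct and follows essentially the same route as the paper, which obtains the corollary by specializing Proposition \ref{pro5.1}: strict positivity and continuity of $\qqzet$ (so $\qqzet_{u-}=\qqzet_u$ and $[\qqzet,\UU]=0$, hence $\langle\qqzet,\UU\rangle=0$ and $\qA_t=\int_0^t\qqzet_u\,d\UU_u$), followed by division of \eqref{eq5.10} by the $\cG_t$-measurable random variable $\qqzet_t$.
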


\section{Strict Comparison Theorem}   \label{sec6}

An important feature of solutions to a BSDE is the so-called (strict) {\it comparison property} (see the statement of Theorem  \ref{the6.1}), which underpins several important applications of BSDEs to arbitrage-free pricing. Therefore, our final goal is to establish the comparison theorem for the BSDE \eqref{BSDEl} within the setup given by Assumptions \ref{ass2.1}, \ref{ass2.2}, \ref{ass5.1} and \ref{ass5.2}. To prove the comparison property of solutions to the BSDE \eqref{BSDEl} by the well-known method of linearization, we define the following auxiliary processes
$$
a_t:=\frac{g^1(t,Y^1_t,Z^1_t)-g^1(t,Y^2_t,Z^1_t)}{Y^1_t-Y^2_t}\,\I_{\{Y^1_t\neq Y^2_t\}},
$$
\begin{align*}
c^1_t:=\frac{1}{\wt{Z}^1_{t,1}-\wt{Z}^2_{t,1}}\Big[ \wh{g}^1(t,Y^2_t,\wt{Z}^1_{t,1},\wt{Z}^1_{t,2}\ldots,\ldots,\wt{Z}^1_{t,d})
- \wh{g}^1(t,Y^2_t,\wt{Z}^2_{t,1},\wt{Z}^1_{t,2}\ldots,\ldots,\wt{Z}^1_{t,d})\Big]\I_{\wt{D}_1},
\end{align*}
and, for every $j=2,3,\ldots,k$,
\begin{align*}
c^j_t&:=\frac{1}{\wt{Z}^1_{t,j}-\wt{Z}^2_{t,j}}\Big[ \wh{g}^1(t,Y^2_t,\wt{Z}^2_{t,1},\ldots,
\wt{Z}^2_{t,j-1},\wt{Z}^1_{t,j},\wt{Z}^1_{t,j+1},\ldots,\wt{Z}^1_{t,d})\\ &-\wh{g}^1(t,Y^2_t,\wt{Z}^2_{t,1},\ldots,\wt{Z}^2_{t,j-1},\wt{Z}^2_{t,j},\wt{Z}^1_{t,j+1},\ldots,\wt{Z}^1_{t,d})\Big]\I_{\wt{D}_j}
\end{align*}
where $\wt{Z}_{t,j}^{i}:=(m_t Z_t^{i})_{j}$ for $i=1,2$ and $\wt{D}_j :=\{\wt{Z}^1_{t,j}\neq \wt{Z}^2_{t,j}\}$ for $j=1,2,\dots,k$.
For brevity, we denote by $c$ the $\brr^k$-valued process $c=(c^1,\dots, c^k)^*$. Recall that $\wh{g}(t,y,z):=g(t,y,m^{-1}_tz)$ and thus if $g$ is  uniformly $m$-Lipschitz continuous, then
$\wh{g}$  is uniformly Lipschitz continuous.

The following result extends the comparison theorem established in \cite{NR2016} to the case of BSDEs driven by RCLL martingales. The reader is
also referred to Theorem 3 in Dumitrescu et al. \cite{DGQS2018} and Theorem 3.2 in Peng and Xu \cite{PX2009} (see also Theorem 3.3 in Peng and Xu \cite{PX2010}) for analogous, albeit less general,
results (see also Theorem 2.2 in Carbone et al. \cite{CFS2008} for another instance of a related comparison theorem for BSDEs).
Recall that $\mu_Q$ denotes the Dol\'eans measure of the process $Q$ on the space $\Omega \times [0,T]$ endowed with the product $\sigma$-algebra
$\cG_T\otimes\cB([0,T])$ so that $\mu_Q(A)=\E\big[\int_0^T\I_A (\omega,t)\,dQ_t(\omega)\big]$ for every $A\in\cG_T\otimes\cB([0,T])$.

\begin{theorem} \label{the6.1}
Suppose that Assumptions \ref{ass2.1}, \ref{ass2.2}, \ref{ass5.1} and \ref{ass5.2} hold with an $\bff$-progressively measurable process $m$.
Let Assumption \ref{ass2.3} be satisfied by the triplet $(g^l,\eta^l,\UU^l)$ for $l=1,2$ and let $(Y^l,Z^l)$
be the unique solution to the BSDE \eqref{BSDEl}. Assume that: \hfill \break
(i) the processes $g^l(\cdot,\cdot,y,z),\, l=1,2$ are $\bff$-predictable for any fixed $(y,z)\in \brr \times \brr^{\dimn}$, \hfill \break
(ii) the process $\UU:=\UU^1-\UU^2$ is \cadlag,  nonincreasing.
\hfill \break (iii) the inequality $\eta^1\ge\eta^2$ is valid, \\
(iv) there exists a bounded process $\zeta $ 
such that $\Delta\wh{M}_t=\Delta \wh{M}^d_t> -1$ for every $t \in [0,T]$ where
\begin{equation*}
\wh{M}_t:=\int_0^t\cfun^*_u\alphat^c_u\,dM^c_u+\int_0^t\zeta^{\ast}_u\alphat^d_u\,dM^d_u=\wh{M}^c_t+\wh{M}^d_t,
\end{equation*}
and the following inequality holds
\begin{equation} \label{eq6.1}
b_t:=\delta_t-\zeta^{\ast}_t m^{d}_t(Z_t^{1,d}-Z_t^{2,d}) \ge 0 ,\quad \mu_{Q}\mbox{\rm -a.e.,}
\end{equation}
where the process $\delta $ is given by
\begin{align*}
\delta_t :=g^1(t,Y^2_t,Z^2_{t,1},\ldots,Z^2_{t,k},Z^1_{t,k+1},Z^1_{t,k+2},\ldots,Z^1_{t,d})-g^2(t,Y^2_t,Z^2_{t,1},\ldots, Z^2_{t,k},Z^2_{t,k+1},
Z^2_{t,k+2},\ldots,Z^2_{t,d}).
\end{align*}
Then the following assertions are true: \\
(a) the comparison property is valid, that is, the inequality $Y^1_t\ge Y^2_t$ holds for every $t\in[0,T]$, \\
(b) the strict comparison property holds, meaning that if $Y^1_0=Y^2_0$,
then $Y^1_t=Y^2_t$ for every $t \in [0,T]$ and thus, in particular, $\eta^1=\eta^2$.    
\end{theorem}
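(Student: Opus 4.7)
The plan is to reduce the statement to a linear BSDE for the difference $y:=Y^1-Y^2$ and apply the explicit representation of Proposition \ref{pro5.1}. Setting $z:=Z^1-Z^2$ and $\UU:=\UU^1-\UU^2$, I would telescope the generators through the processes $a$, $c=(c^1,\dots,c^k)^{\ast}$, $\zeta$ and the residual defined by \eqref{eq6.1}, obtaining the identity
\[
g^1(t,Y^1_t,Z^1_t)-g^2(t,Y^2_t,Z^2_t)=a_t y_t+c_t^{\ast}m^c_t z^c_t+\zeta_t^{\ast}m^d_t z^d_t+b_t.
\]
Hence $y$ solves the linear BSDE
\[
dy_t=-\big(a_ty_t+b_t+c_t^{\ast}m^c_t z^c_t+\zeta_t^{\ast}m^d_t z^d_t\big)\,dQ_t+z_t^{\ast}\,dM_t+d\UU_t,\qquad y_T=\eta^1-\eta^2,
\]
which fits the framework of \eqref{linBSDE}. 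The uniform $m$-Lipschitz property of $g^1$ (equivalently, the Lipschitz property of $\wh g^1$) bounds $a$ and $c$ by $\wh L$; $\bff$-predictability of these coefficients follows from hypothesis (i); the residual $b$ lies in $\Hs$; and $\UU$ is of finite variation with $|\UU|_T\in\Ltg$ because of (ii), so Assumption \ref{ass5.3} is in force.

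I would then invoke Proposition \ref{pro5.1} with $\zeta$ playing the role of $d$, which yields
\[
\qqzet_t y_t=\E\bigg[\qqzet_T(\eta^1-\eta^2)+\int_t^T\qqzet_u b_u\,dQ_u-(\qA_T-\qA_t)\,\Big|\,\cG_t\bigg],
\]
with $\qqzet_t=\cE_t(\wh Q+\wh M)$ and $\qA_t=\int_0^t\qqzet_{u-}\,d\UU_u+\langle\qqzet,\UU\rangle_t$. Because $\wh Q$ is bounded and $\Delta\wh M_t>-1$ by hypothesis (iv), both Dol\'eans exponentials are strictly positive, so $\qqzet>0$ throughout $[0,T]$. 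The finite-variation structure of $\UU$ together with $[\qqzet,\UU]_t=\sum_{u\le t}\Delta\qqzet_u\Delta\UU_u$ gives $\qA_t=\int_0^t\qqzet_u\,d\UU_u-N_t$, where $N:=[\qqzet,\UU]-\langle\qqzet,\UU\rangle$ is a local martingale; the $L^2$ bound on $\sup_t\qqzet_t$ established within the proof of Lemma \ref{lem5.1} combined with $|\UU|_T\in\Ltg$ implies $\E[|[\qqzet,\UU]|_T]<\infty$, hence $N$ is a true martingale and its conditional expectation vanishes. This reduces the representation to
\[
\qqzet_t y_t=\E\bigg[\qqzet_T(\eta^1-\eta^2)+\int_t^T\qqzet_u b_u\,dQ_u-\int_t^T\qqzet_u\,d\UU_u\,\Big|\,\cG_t\bigg].
\]
All three summands inside the conditional expectation are a.s.\ non-negative, since $\qqzet>0$, $\eta^1\ge\eta^2$, $b\ge 0$ by \eqref{eq6.1}, and $d\UU\le 0$ by hypothesis (ii); dividing by the strictly positive $\qqzet_t$ gives assertion (a). For the strict comparison (b), evaluating at $t=0$ with $Y^1_0=Y^2_0$ forces the whole expectation to vanish, and since each of the three non-negative summands then has zero expectation, each must be zero $\bpp$-a.s. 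This in turn implies (using the strict positivity of $\qqzet$) that $\eta^1=\eta^2$ a.s., $b=0$ $\mu_Q$-a.e., and $\UU$ is a.s.\ constant on $[0,T]$; substituting back into the same representation at an arbitrary $t$ gives $\qqzet_t y_t=0$, hence $Y^1_t=Y^2_t$ for every $t$, the identity extending simultaneously to all $t$ by RCLL regularity of $y$.

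The principal technical obstacle is the passage from local to true martingale for $N=[\qqzet,\UU]-\langle\qqzet,\UU\rangle$; this cancellation is what converts the opaque predictable-covariation term $\langle\qqzet,\UU\rangle$ into the pathwise non-negative quantity $-\int_t^T\qqzet_u\,d\UU_u$ inside the conditional expectation, and without it the sign analysis of $\qA_T-\qA_t$ is not transparent. A subsidiary check, routine but necessary, is the $\bff$-predictability of the linearization coefficients $a$ and $c$, which follows from hypothesis (i) together with the measurability of the indicator sets $\{Y^1\ne Y^2\}$ and $\wt D_j$ used in their definitions.
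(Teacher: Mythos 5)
Your proof is correct and follows the paper's overall strategy---linearize the difference $y=Y^1-Y^2$ into the BSDE \eqref{eq6.3}, apply Proposition \ref{pro5.1}, and read off signs from the explicit representation \eqref{eq6.4}---but you handle the crucial term $\qA$ by a genuinely different decomposition. The paper proves that $\qA$ is an $\bff$-supermartingale: it shows that $\PhiA_t:=\int_0^t\qqzet_{u-}\,d\UU_u+[\qqzet,\UU]_t$ is nonincreasing (its jumps equal $\qqzet_{u-}(1+\Delta\wh{M}_u)\Delta\UU_u\le 0$), passes to the dual predictable projection $\PhiA^p$, and writes $\qA=\PhiA^p+\int_0^{\cdot}\qqzet_{u-}\,d(\UU_u-\UU^p_u)$, where the last term is a local martingale bounded from below by an integrable random variable, hence a supermartingale; this route invokes the square-integrability of $\UU^p_T$ (via \cite{CE2015}) and the predictable representation property (Assumption \ref{ass2.2}) to control $\int\qqzet_{u-}\,d(\UU-\UU^p)$. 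You instead use the pathwise identity $\int_0^t\qqzet_{u-}\,d\UU_u+[\qqzet,\UU]_t=\int_0^t\qqzet_u\,d\UU_u$ to write $\qA_t=\int_0^t\qqzet_u\,d\UU_u-N_t$ with $N=[\qqzet,\UU]-\langle\qqzet,\UU\rangle$, and you upgrade $N$ from local to true martingale because $[\qqzet,\UU]$ has integrable total variation: its variation is at most $2\sup_{t}\qqzet_t\,|\UU|_T$, integrable by Cauchy--Schwarz using $\|\qqzet\|_{\Ss}<\infty$ from the proof of Lemma \ref{lem5.1} and $|\UU|_T\in\Ltg$. This is sound (a finite-variation process with integrable variation minus its compensator is a uniformly integrable martingale), it is more elementary than the paper's argument (no dual projections and no appeal to Assumption \ref{ass2.2} at this stage), and it makes the sign of the $\UU$-contribution pathwise transparent, which also streamlines your proof of assertion (b), stated only tersely in the paper. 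One point you should make explicit: $|\UU|_T\in\Ltg$ does not follow from hypothesis (ii) alone---it uses that $\UU$ is nonincreasing together with $\UU^1_T,\UU^2_T\in\Ltg$ from Assumption \ref{ass2.3}(iii) and the triviality of $\cG_0$ (so that $\UU_0$ is deterministic); the paper's proof needs, and likewise glosses over, the same verification of Assumption \ref{ass5.3}(iii) before Proposition \ref{pro5.1} can be applied.
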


\proof
Since $(g^l,\eta^l,\UU^l)$ satisfies Assumption \ref{ass2.3} for $l=1,2$, then BSDE \eqref{BSDEl} has the unique solution $(Y^l,Z^l)$ such that
$(Y^l-\UU^l,Z^l)\in \SSs$. We denote $g:=g^1-g^2$ and
$$
y:=Y^1-Y^2,\ z:=Z^1-Z^2,\ \UU:=\UU^1-\UU^2,\ \eta:=\eta^1-\eta^2.
$$
It is straightforward to check that the pair $(y,z)$ satisfies $(y-\UU,z)\in \SSs$ and solves the following linear BSDE (recall the notations at the beginning of this section)
\begin{equation} \label{eq6.2}
\left\{ \begin{array}
[c]{ll}
dy_t=-\big(a_ty_t+\delta_t+\cfun^*_tm^{c}_tz^c_t\big)\,dQ_t+z_t^{\ast}\,dM_t+d\UU_t,\medskip\\
y_{T}=\eta.
\end{array} \right.
\end{equation}
Since the generator $g^1$ is assumed to be uniformly $m$-Lipschitz continuous with a constant $\wh{L}_1$, it is clear that the mapping $\wh{g}^1$
is uniformly Lipschitz continuous with the same constant. Consequently, the $\bff$-progressively measurable process $a$ and the $\bff$-predictable
process $\cfun$ appearing in \eqref{eq6.2} are bounded, specifically, $|a_t|\le \wh{L}_1$ and $\|\cfun_t \|\le \wh{L}_1\sqrt{k}$ for all $t \in [0,T]$.
Therefore, the generator in  \eqref{eq6.2} is uniformly $m$-Lipschitz continuous and thus we may apply Theorem \ref{the4.1}. Since
$b_t=\delta_t-\zeta^{\ast}_t m^{d}_tz^d_t$, we also have that
\begin{equation} \label{eq6.3}
\left\{ \begin{array}
[c]{ll}
dy_t=- \big(a_ty_t+b_t+\cfun^*_tm^{c}_tz^c_t+\zeta^{\ast}_tm^d_t z^d_t\big)\,dQ_t+z_t^{\ast}\,dM_t+d\UU_t,\medskip\\
y_{T}=\eta.
\end{array} \right.
\end{equation}
Since the generators $g^1$ and $g^2$ satisfy Assumption \ref{ass2.3} and $\zeta $ is bounded, we have $b$ belongs to $\Hs$.

In the remaining part of the proof, we focus on the linear BSDE \eqref{eq6.3}. We deduce from Proposition \ref{pro5.1} that, under the present
assumptions, if $(y,z)\in\Ss$ is a unique solution to \eqref{eq6.3}, then the process $y$ satisfies
\begin{equation} \label{eq6.4}
y_t=\qqzet_t^{-1}\,\E\bigg[\qqzet_{T}\eta+\int_t^T\qqzet_ub_u\,dQ_u-\big(\qA_T-\qA_t\big)\,\Big|\,\cG_t\bigg]
\end{equation}
where we denote
\begin{align*}
\qA_t :=\int_0^t \qqzet_{u-}\,d\UU_u+\langle \qqzet,\UU \rangle_t
\end{align*}
and the strictly positive process $\qqzet$ is given by \eqref{eq5.2}. Let us check that, under the present assumptions, the process $\qA$ is an $\bff$-supermartingale (which yields that  $\E\qA$ is nonincreasing).
Since, by assumption, the process $\UU$ is nonincreasing, which implies that $\UU$ is of finite variation, we have $[\qqzet,\UU]_t=\sum_{u\le t}\Delta\qqzet_u\Delta\UU_u$
where $\Delta \qqzet_u=\qqzet_{u-}\Delta\wh{M}_u$. Since $\Delta\wh{M}_u>-1$ and the process $\qqzet$ is strictly positive,
it is easy to see that the process $\PhiA$, which is given by
\begin{align*}
\PhiA_t:=\int_0^t\qqzet_{u-}\,d\UU_u+[\qqzet,\UU]_t=\int_0^t\qqzet_{u-}\,d\UU_u+\sum_{u\le t}\Delta\qqzet_u\Delta\UU_u
\end{align*}
is nonincreasing since the (pathwise) continuous component of $\PhiA$ is nonincreasing and its jumps satisfy
\begin{align*}
\Delta \PhiA_u=\qqzet_{u-}\Delta\UU_u+\Delta\qqzet_u\Delta\UU_u=\qqzet_{u-}\big(1+\Delta\wh{M}_u\big)\Delta\UU_u\le 0 .
\end{align*}
Since $\PhiA$ is nonincreasing, its dual $\bff$-predictable projection $\PhiA^p$ is nonincreasing as well. Moreover, it equals
\begin{align*}
\PhiA^p_t=\int_0^t\qqzet_{u-}\,d\UU^p_u+\langle\qqzet,\UU\rangle_t
\end{align*}
where $\UU^p$ is the dual $\bff$-predictable projection of $\UU $. Note that since $\UU^1_T$ and $\UU^2_T$ belong to $\LsT$, the random variable
$\UU_T$ is integrable and thus by noticing that  $\cG_0$ is trivial, the nonincreasing process $\UU$ is integrable as well and thus $\UU^p$ is well defined and the process
$M^{\UU}=\UU-\UU^p$ is a uniformly integrable $\bff$-martingale (see, for instance, Corollary 5.31 in \cite{HWY1992}). 
Moreover, from Theorem 8.2.19 in \cite{CE2015} and the property that $\UU_T$ belongs to $\LsT$, we deduce that the random variable $\UU^p_T$ is square-integrable
and thus, since $M^{\UU}_t=\E(\UU_T-\UU^p_T\,|\,\cG_t)$, we see that $M^{\UU}$ is a square-integrable $\bff$-martingale with $M^{\UU}_0=0$.
 Hence, by Assumption \ref{ass2.2}, there exists a process $X \in \Ls$ such that $M^{\UU}_t=\int_0^t X^*_u\,dM_u$. We define the process
\begin{align*}
M^{q,\UU }_t:=\int_0^t\qqzet_{u-}\,dM^{\UU}_u=\int_0^t\qqzet_{u-}X^*_u\,dM_u.
\end{align*}
Since the process $q$ belongs $\Ss$ and $X$ is in $\Ls$, we obtain
\begin{align*}
\bpp\bigg[\int_0^T\|\qqzet_{t-}m_tX^*_t\|^2\,dQ_t<\infty\bigg]\ge \bpp\bigg[\sup_{t\in[0,T]}(\qqzet^2_t)\int_0^T\|m_tX^*_t\|^2\,dQ_t<\infty\bigg]=1
\end{align*}
and thus $M^{q,\UU}$ is well defined and it is a local $\bff$-martingale. Furthermore, since the process $M^{q,\UU}$ is bounded from below by the integrable random variable
$\psi:=-\sup_{t\in[0,T]}(\qqzet_t)(|D_T-D_T^p|+|D_0-D_0^p|)$, it is in fact an $\bff$-supermartingale. To complete the proof of assertion (a), we observe that, for every $t\in[0,T]$,
\begin{align*}
\qA_t=\PhiA^p_t+\int_0^t\qqzet_{u-}\,d(\UU_u-\UU^p_u)=\PhiA^p_t+\int_0^t\qqzet_{u-}\,dM^{\UU}_u=\PhiA^p_t+M^{q,\UU }_t,
\end{align*}
which means that  the process $\qA$ is also an $\bff$-supermartingale. Since we have also assumed that $\eta=\eta^1-\eta^2\ge 0$ and the process $b\ge 0,\,\mu_Q$-a.e., it is readily seen from \eqref{eq6.4} that the inequality $y_t\ge0$ holds for all $t \in [0,T]$. Assertion (b) is also an easy consequence of representation \eqref{eq6.4}.
\endproof

\begin{remark}  \label{rem6.1}
{\rm It is easy to see that condition \eqref{eq6.1} holds if $g^1(\cdot,Y^2,Z^2)\ge g^2(\cdot,Y^2,Z^2),\,\mu_{Q}$-a.e., and
\begin{equation} \label{eq6.5}
\mu_t-\zeta_t^{\ast}m^{d}_tz^d_t\ge 0,\quad\mu_{Q}\mbox{\rm -a.e.},
\end{equation}
where we denote
\begin{align*}
\mu_t:=g^1\big(t,Y^2_t,\ldots,Z^2_{t,k},Z^1_{t,k+1},Z^1_{t,k+2},\ldots,Z^1_{t,d}\big)-g^1\big(t,Y^2_t,\ldots,Z^2_{t,k},Z^2_{t,k+1},Z^2_{t,k+2},\ldots,Z^2_{t,d}\big).
\end{align*}
In particular, if the martingale $M$ is continuous so that $k=\dimn$, then assumption (iv) in Theorem \ref{the6.1} reduces to the inequality
$g^1(\cdot,Y^2,Z^2)\ge g^2(\cdot ,Y^2,Z^2),\,\mu_{Q}$-a.e., which is the standard condition in comparison theorems for BSDEs driven by the
Brownian motion (see also Theorem 3.3 in \cite{NR2016} for the case of BSDEs driven by continuous martingales).}
\end{remark}

\begin{remark}  \label{rem6.2}
{\rm Let us assume that $M=(M^c,M^d)$ where $M^c$ is an $\brr^{\dimn-1}$-valued, continuous $\bff$-local martingale
and $M^d$ is the compensated martingale of the unit jump at an $\bff$-stopping time $\tau$ with the $\bff$-intensity process $\lambda$.
This means that we have, for every $t \in [0,T]$,
$$
M^d_t=\I_{\{\tau\le t\}}-\int_0^t(m^d_u)^2\,du=\I_{\{\tau\le t\}}-\int_0^t\lambda_u\,du
$$
so that $m^d_t=\sqrt{\lambda_t}$. Let us denote $\zeta_t=\psi_t m^d_t=\psi_t \sqrt{\lambda_t}$. Then condition $\Delta \wh{M}_t>-1$ is satisfied if $\psi_t > -1$
and inequality \eqref{eq6.5} becomes
\begin{align*}
g^1(t,Y^2_t,Z^2_{t,1},Z^1_{t,2})-g^1(t,Y^2_t,Z^2_{t,1},Z^2_{t,2})\ge\psi_t\lambda_t(Z^1_{t,2}- Z^2_{t,2}).
\end{align*}
In particular, we recover condition (25) in Dumitrescu et al. \cite{DGQS2018} and condition (12) in \cite{DQS2018}. Of course, this condition should be combined with the postulate that $g^1(\cdot,Y^2,Z^2)\ge g^2(\cdot,Y^2,Z^2),\,\mu_{Q}$-a.e..}
\end{remark}


\end{document}